\setlist{itemsep=0pt, topsep=0pt}
\newcommand{\floor}[1]{\lfloor#1\rfloor}
\newcommand{\tbf}[1]{\textbf{#1}}
\newtheorem{theorem}{Theorem}[section]
\newtheorem{corollary}[theorem]{Corollary}
\newtheorem{lemma}[theorem]{Lemma}
\newtheorem{proposition}[theorem]{Proposition}
\newtheorem{problem}[theorem]{Problem}
\newtheorem{conjecture}[theorem]{Conjecture}
\newcommand{\mc}{\mathrm{mc}}
\title{Large monochromatic components in 3-edge-colored Steiner triple systems}
\author{Louis DeBiasio$^{1}$, Michael Tait$^{2}$}
\date{\today}
\begin{document}

\maketitle
\noindent\footnotetext[1]{Department of Mathematics, Miami University {\tt debiasld@miamioh.edu}. Research supported in part by Simons Foundation Collaboration Grant \# 283194.}
\noindent\footnotetext[2]{Department of Mathematics \& Statistics, Villanova University {\tt michael.tait@villanova.edu}. Research supported in part by National Science Foundation Grant DMS-2011553.}

\begin{abstract}
It is known that in any $r$-coloring of the edges of a complete $r$-uniform hypergraph, there exists a spanning monochromatic component.  Given a Steiner triple system on $n$ vertices, what is the largest monochromatic component one can guarantee in an arbitrary 3-coloring of the edges?  

Gy\'arf\'as \cite{G2} proved that $(2n+3)/3$ is an absolute lower bound and that this lower bound is best possible for infinitely many $n$.  On the other hand, we prove that for almost all Steiner triple systems the lower bound is actually $(1-o(1))n$.  We obtain this result as a consequence of a more general theorem which shows that the lower bound depends on the size of a largest \emph{3-partite hole} (that is, disjoint sets $X_1, X_2, X_3$ with $|X_1|=|X_2|=|X_3|$ such that no edge intersects all of $X_1, X_2, X_3$) in the Steiner triple system (Gy\'arf\'as previously observed that the upper bound depends on this parameter).  Furthermore, we show that this lower bound is tight unless the structure of the Steiner triple system and the coloring of its edges are restricted in a certain way.

We also suggest a variety of other Ramsey problems in the setting of Steiner triple systems.
\end{abstract}

\section{Introduction}

Given a hypergraph $H$, let $H^{(2)}$ be the \emph{shadow-graph} of $H$; that is, the 2-uniform hypergraph on $V(H)$ where $\{x,y\}\in E(H^{(2)})$ if and only if there exists $e\in E(H)$ such that $\{x,y\}\subseteq e$.  We say that $H$ is \emph{connected} if $H^{(2)}$ is connected.  The \emph{components} of $H$ are the maximal connected subgraphs of $H$.
Given an $r$-coloring of $H$ (which for the purposes of this paper means a partition $\{E_1, \dots, E_r\}$ of $E(H)$ where we allow for some of the sets in the partition to be be empty) a \emph{monochromatic component} of $H$ is a component of the hypergraph $H_i=(V(H), E_i)$ for some $i\in [r]$.

For a hypergraph $H$, let $\mc_r(H)$ be the largest integer $m$ such that in every $r$-coloring of the edges of $H$, there exists a monochromatic component on at least $m$ vertices.  It is well known \cite{G} that for all $r\geq 2$, $\mc_r(K_n^r)=n$ where $K_n^r$ is the complete $r$-uniform hypergraph on $n$ vertices (when $r=2$ this is equivalent to the statement ``a graph or its complement is connected'').  

First note that for any hypergraph $H$, we have $\mc_r(H)\geq \mc_r(H^{(2)})$.  So if $H$ has the property that every pair of vertices is contained in at least one edge, then $\mc_r(H)\geq \mc_r(K_n)\geq \frac{n}{r-1}$, where the last inequality holds by a well-known result of Gy\'arf\'as \cite{G}.  

A Steiner triple system is a $3$-uniform hypergraph in which each pair of vertices is contained in exactly one edge, and it is well-known that a Steiner triple system on $n$ vertices exists if and only if $n\equiv 1,3\bmod 6$.  Given a Steiner triple system $S$, we sometimes write $|S|$ to denote the number of vertices in $S$ and we note that the number of edges in $S$ is exactly $\frac{1}{3}\binom{|S|}{2}=\frac{|S|(|S|-1)}{6}$.
For $n\equiv 1,3\bmod 6$, let $\mathcal{S}_n$ be the family of all Steiner triple systems on $n$ vertices.  The purpose of this paper is to address the following question:  Given $S\in \mathcal{S}_n$, what is the value of $\mc_3(S)$?  Note that by the observation in the previous paragraph, we trivially have $\mc_3(S)\geq \frac{n}{2}$ for all $S\in \mathcal{S}_n$.

This question unexpectedly led to the consideration of the following independence-type parameter. Given a hypergraph $H$ on $n$ vertices, let $\alpha_k^*(H)$ be the maximum integer $a_k^*$ such that there exist disjoint sets $X_1, X_2, \dots, X_k\subseteq V(H)$ with $|X_1|= |X_2|= \dots= |X_k|= a_k^*$ such that for all $e\in E(H)$, $e\cap X_i=\emptyset$ for some $i\in [k]$.  We call the sets $X_1, \dots, X_k$ a \emph{$k$-partite hole} and we say $\alpha^*_k$ is the \emph{$k$-partite-hole number} of $H$.  Note that since the sets are disjoint, we trivially have $\alpha^*_k(H)\leq \frac{n}{k}$.

We begin by noting the following simple upper bound on $\mc_k(H)$ for all $k$-uniform hypergraphs $H$ (essentially, this example was observed earlier in a more restricted form for the case of Steiner triple systems by Gy\'arf\'as \cite[Lemma 3]{G2}).

\begin{proposition}\label{n-a*_upper}
For all $k$-uniform hypergraphs $H$, $\mc_k(H)\leq n-\alpha_k^*(H)$.
\end{proposition}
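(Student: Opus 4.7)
The plan is to exhibit an explicit $k$-coloring of $E(H)$ in which every monochromatic component has size at most $n-\alpha_k^*(H)$, which immediately gives the desired upper bound on $\mc_k(H)$.

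First I would invoke the definition of $\alpha_k^*(H)$ to fix disjoint sets $X_1,\dots,X_k\subseteq V(H)$ with $|X_1|=\cdots=|X_k|=\alpha_k^*(H)$ such that every edge $e\in E(H)$ satisfies $e\cap X_i=\emptyset$ for some $i\in[k]$. Next, I would define a coloring $c:E(H)\to[k]$ by setting $c(e)$ to be (say) the least index $i$ with $e\cap X_i=\emptyset$; this is well-defined precisely because $X_1,\dots,X_k$ is a $k$-partite hole.

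Then I would observe that for each color $i\in[k]$, every edge $e$ with $c(e)=i$ is contained in $V(H)\setminus X_i$. Consequently every monochromatic component in color $i$, being spanned by such edges, has its vertex set contained in $V(H)\setminus X_i$, and hence has at most $n-|X_i|=n-\alpha_k^*(H)$ vertices. Since this bound holds uniformly across all colors, the largest monochromatic component in the coloring $c$ has at most $n-\alpha_k^*(H)$ vertices, yielding $\mc_k(H)\leq n-\alpha_k^*(H)$.

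There is really no obstacle here: the only subtlety is noting that the tie-breaking rule (picking the least such $i$) is needed to make $c$ a genuine coloring (that is, a partition of $E(H)$) rather than a multi-assignment, and that isolated vertices of the color-$i$ hypergraph $(V(H),c^{-1}(i))$ form trivial components of size one, which is still bounded by $n-\alpha_k^*(H)$ (since $\alpha_k^*(H)\le n/k\le n-1$ whenever $H$ is nontrivial; in the degenerate case $\alpha_k^*(H)=0$ the bound is vacuous).
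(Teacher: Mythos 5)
Your proof is correct and takes essentially the same approach as the paper: fix a $k$-partite hole $X_1,\dots,X_k$, color each edge by an index $i$ with $e\cap X_i=\emptyset$, and observe that each color-$i$ component misses $X_i$. Your extra remarks about tie-breaking and singleton components are fine fussiness that the paper's brief argument elides, but they do not change the substance.
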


\begin{proof}
Let $a_k:=\alpha_k^*(H)$.  Let $X_1, X_2, \dots, X_k\subseteq V(H)$ with $|X_1|= |X_2|= \dots =|X_k|= a_k$ such that for all $e\in E(H)$, $e\cap X_i=\emptyset$ for some $i\in [r]$.  Color every edge which avoids $X_i$ with color $i$ for all $i\in [k]$.  Note that every component of color $i$ avoids $X_i$, so $\mc_k(H)\leq n-a_k$.  
\end{proof}

After a draft of this paper appeared on arXiv, we were made aware of the fact that this problem and the above connection between the two parameters had been studied earlier by Gy\'arf\'as \cite{G2} (what we call a 3-partite hole, he calls a ``cross'').  He proved the following:

\begin{theorem}[Gy\'arf\'as {\cite[Theorem 1, Lemma 3, Theorem 4]{G2}}]\label{gy_thm}~
\begin{enumerate}
\item For $n\equiv 1,3\bmod 6$,  $\displaystyle \min_{S\in \mathcal{S}_n}\mc_3(S)\geq 2n/3+1$.
\item For $n\equiv 1,3\bmod 6$,  $\displaystyle \min_{S\in \mathcal{S}_n}\mc_3(S)\leq n-\max_{S\in \mathcal{S}_n} \alpha^*_3(S)$.
\item For $n\equiv 3 \bmod 18$, $\displaystyle\max_{S\in \mathcal{S}_n} \alpha^*_3(S)=n/3-1$; consequently, $\displaystyle \min_{S\in \mathcal{S}_n}\mc_3(S)=2n/3+1$.
\end{enumerate}
\end{theorem}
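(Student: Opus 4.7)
The plan is to handle (ii) immediately and then attack (i) and (iii) separately.

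\emph{Part (ii)} is a one-line consequence of Proposition~\ref{n-a*_upper}: pick any $S^* \in \mathcal{S}_n$ with $\alpha_3^*(S^*) = \max_{S \in \mathcal{S}_n} \alpha_3^*(S)$ and apply the proposition to conclude $\min_{S\in\mathcal{S}_n}\mc_3(S) \leq \mc_3(S^*) \leq n - \alpha_3^*(S^*)$.

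\emph{Part (i)}, the lower bound, I would prove by contradiction. Assume some $S \in \mathcal{S}_n$ has a 3-edge-coloring in which every monochromatic component has size at most $2n/3$. Let $C$ be a largest monochromatic component, say red, with $|C|=c\leq 2n/3$, so $A=V(S)\setminus C$ has $|A|\geq n/3$. Since $C$ is a maximal red component, no STS-edge crossing the $C$--$A$ cut is red; thus for each $v\in A$ the $c$ pairs $\{u,v\}$ with $u\in C$ split as $C = N_B(v)\sqcup N_G(v)$ according to the color (blue or green) of the STS-edge containing the pair, and $\{v\}\cup N_B(v)$ lies in a single blue component while $\{v\}\cup N_G(v)$ lies in a single green component. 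The plan is to show that these ``fans'' must merge into a monochromatic component exceeding $2n/3$. From $\sum_{v\in A}(|N_B(v)|+|N_G(v)|)=c|A|$ one may assume $\sum_v |N_B(v)|\geq c|A|/2$, and two blue fans of combined size more than $c$ must meet in $C$, forcing the corresponding vertices of $A$ into a common blue component. Iterating this ``overlap forces merging'' observation, together with the bound $|A|\geq n/3$, should yield a blue or green component of size exceeding $2n/3$. The main obstacle is making the counting sufficiently sharp: a priori the fans could all hover near $c/2$ and be spread thinly, and one likely has to exploit the STS structure on pairs within $A$ (which carry their own 3-coloring) to force the required merging at the right threshold.

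\emph{Part (iii)} has two directions. The upper bound $\alpha_3^*(S)\leq n/3-1$ for $n\equiv 3\bmod 18$ follows from a parity obstruction: if $V=X_0\sqcup X_1\sqcup X_2$ with $|X_i|=n/3$ were a 3-partite hole, then each pair of classes would be spanned by exactly $(n/3)^2/2 = n^2/18$ STS-edges (with two endpoints in one class and one in the other), which is not an integer when $n\equiv 3\bmod 6$. For the matching lower bound I would construct an $S\in\mathcal{S}_n$ admitting a 3-partite hole of size $n/3-1$, using that $n\equiv 3\bmod 18$ forces $n/3\equiv 1\bmod 6$ (so a sub-STS on $n/3$ points exists) and exploiting a group-divisible-design construction with three groups of size $n/3$ whose transversal triples can be concentrated on a single canonical ``extra'' triple which is then excluded from the $X_i$'s. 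Producing and verifying the concrete design is the main obstacle of part (iii); once done, combining (i), (ii), and this construction gives $\min_{S\in\mathcal{S}_n}\mc_3(S)=2n/3+1$.
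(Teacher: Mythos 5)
Your part (ii) is correct and coincides with the paper's derivation: apply Proposition~\ref{n-a*_upper} to any $S$ achieving $\max_{S}\alpha_3^*(S)$. Your parity argument for the upper bound in part (iii) is also valid and is in fact cleaner than the paper's Proposition~\ref{alpha3_upper}: between any two classes of a putative equitable tripartition there are $(n/3)^2$ cross-pairs, each crossing triple covers exactly two of them, so there would be $(n/3)^2/2$ such triples, and this is not an integer for $n\equiv 3\bmod 6$ since $n/3$ is odd. (Proposition~\ref{alpha3_upper} instead uses a total edge-count inequality, which works for $n\equiv 1\bmod 6$ as well, but for part~(iii) only $n\equiv 3\bmod 18$ is at issue so your shortcut suffices.)

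Part (i), however, has a genuine gap that you yourself flag. The fan-merging idea---take the largest red component $C$, observe that each $v\in A=V\setminus C$ splits $C$ into blue and green neighborhoods, and argue that overlapping fans force merging---does not by itself reach $2n/3+1$: nothing in the counting you set up prevents the blue fans from clustering onto two or three disjoint regions of $C$ of size near $c/2$, with $A$ split among them, so that no blue (or green) component grows past $c$. Pushing past this needs a serious additional idea exploiting the triples inside $A$ and the pairs $A$--$A$, and you have not supplied it. The paper avoids this entirely: it passes to the shadow-multicoloring of $K_n$, invokes the structural Lemma~\ref{lem:3col}, derives Theorem~\ref{n-a*_lower} (either $\mc_3\geq n-\alpha_3^*$ or a very restricted $4$-part partition with $|V_2|,|V_3|,|V_4|\leq\alpha_3^*$), and then feeds $\alpha_3^*(S)\leq n/3-1$ from Proposition~\ref{alpha3_upper} into a short optimization in the proof of Theorem~\ref{mc 2/3 lower bound}. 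That route sidesteps any ad hoc component-merging analysis, and is the one you should emulate if you want an actual proof rather than a plan.

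Finally, the construction half of part (iii) is also incomplete, and the idea as stated points the wrong way. A $3$-GDD with three groups of size $n/3$ has \emph{only} transversal blocks, so its natural group tripartition is the exact opposite of a $3$-partite hole; adjoining sub-STSs on the groups does not help, and removing one vertex per group can block at most three of the $(n/3)^2$ transversals, not all of them. What is actually needed is an STS admitting a vertex $3$-coloring with no rainbow block (a bicoloring in the sense of Section~\ref{sec:upper}) whose smallest color class has size $n/3-1$, which is a different and much more delicate object. Note that the paper does not reprove this direction at all---it cites Gy\'arf\'as---and its own Section~\ref{sec:upper} only delivers a weaker $(2/3+o(1))n$ via a recursive bicoloring; so your proposal, if carried out, would be doing more than the paper, but as written it is not a proof.
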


Our work can now be viewed as an extension of the results in \cite{G2}.  In Theorem \ref{gy_thm}(i), Gy\'arf\'as proves an absolute lower bound on $\mc_3(S)$.  Our first main result is a lower bound on $\mc_3(S)$ in terms of $\alpha^*_3(S)$.

\begin{theorem}\label{thm_main}
For all $S\in \mathcal{S}_n$, $\mc_3(S)\geq n-2\alpha^*_3(S)$.
\end{theorem}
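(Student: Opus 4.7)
The plan is by contradiction. Suppose some 3-coloring of $S$ has every monochromatic component of size strictly less than $n-2a$, where $a := \alpha_3^*(S)$. I will construct three pairwise disjoint subsets $X_1, X_2, X_3 \subseteq V(S)$, each of size $a+1$, such that no edge of $S$ meets all three, contradicting the definition of $\alpha_3^*(S)$.

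Let $C$ be a largest monochromatic component of the coloring, WLOG colored $1$, so $|C| \leq n-2a-1$ and $U := V(S)\setminus C$ satisfies $|U| \geq 2a+1$. The crucial observation is that any triple of $S$ meeting both $C$ and $U$ must be colored $2$ or $3$: a color-$1$ triple meeting $C$ would be absorbed into the component $C$ by maximality. My primary template is to place $X_2, X_3 \subseteq C$ and $X_1 \subseteq U$, choosing $X_2$ and $X_3$ to lie in distinct color-$2$ components of $S$ and in distinct color-$3$ components of $S$. Under this arrangement, any triple meeting all three $X_i$ would cross $C$ and $U$ (so must be colored $2$ or $3$) and have its three vertices in a common color-$2$ or color-$3$ component of $S$, contradicting the placement of $X_2, X_3$; hence no such triple exists. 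To realize the template, I partition $C$ by the joint type of (color-$2$ component, color-$3$ component) and look for two classes of size at least $a+1$ whose types differ in both coordinates; any $(a+1)$-subset of $U$ then completes the hole.

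The main obstacle is the degenerate case in which no such pair of classes exists. I expect this to force the vertices of $C$ to sit inside a single color-$2$ component $B_2$ of $S$ and/or a single color-$3$ component $B_3$ of $S$. In the doubly degenerate subcase $C \subseteq B_2 \cap B_3$, a short connectivity argument on pairs $\{u, v\}$ with $u \in C$ (considering the color of the unique triple containing the pair) shows $V(S) = B_2 \cup B_3$. I then take $X_1 \subseteq C$, $X_2 \subseteq B_3 \setminus B_2$, $X_3 \subseteq B_2 \setminus B_3$; the sizes work because $|B_3 \setminus B_2| = n - |B_2| > 2a$ and similarly for $|B_2 \setminus B_3|$, while $|C|$ is ample. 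A triple meeting all three $X_i$ cannot be colored $2$ (since $v_2 \notin B_2$) or $3$ (since $v_3 \notin B_3$), so it must be color $1$ lying in a color-$1$ component other than $C$ (because $v_2 \notin C$); its vertex in $B_2 \cap B_3$ therefore lies in $(B_2 \cap B_3) \setminus C$, disjoint from $X_1 \subseteq C$. The singly degenerate case, where $C \subseteq B_2$ but $C$ meets several color-$3$ components of $S$, I plan to handle as a hybrid: if some color-$3$ component $E^\ast$ satisfies $|C \cap E^\ast| \geq a+1$, the same connectivity argument with $E^\ast$ in place of $B_3$ yields $V(S) = B_2 \cup E^\ast$ and the doubly degenerate construction applies; otherwise each color-$3$ component meets $C$ in at most $a$ vertices, and I partition the color-$3$ components into two families with combined $C$-intersections of size at least $a+1$ to form $X_2, X_3$, placing $X_1 \subseteq U \setminus B_2$ (which has size $n - |B_2| > 2a$ since $C \subseteq B_2$).

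The main technical difficulty I foresee is the meticulous case analysis and verification of the size bounds in each subcase, particularly in the mixed configurations where $C$ only partially sits in its ambient color-$2$ component and one must simultaneously ensure $|X_1| \geq a+1$ and rule out residual color-$1$ bad triples.
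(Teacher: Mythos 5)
Your plan takes a genuinely different route from the paper: the paper transfers the edge-coloring of $S$ to a $3$-coloring of the shadow $K_n$, applies a structural lemma for $3$-colorings of complete graphs (Lemma~\ref{lem:3col}, which produces either a spanning monochromatic component or a specific $4$-part ``cross'' structure), and then reads off the bound on $\alpha^*_3$ from that structure. You instead argue by contradiction directly in the hypergraph, using the largest monochromatic component $C$ and attempting to build a $3$-partite hole. The key observations you use are sound: every triple meeting both $C$ and $U = V\setminus C$ avoids color $1$; and placing $X_2,X_3\subseteq C$ in distinct color-$2$ components and distinct color-$3$ components, with $X_1\subseteq U$, does forbid a crossing triple. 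Your ``doubly degenerate'' case ($C\subseteq B_2\cap B_3$) is handled correctly, including the short argument that $V=B_2\cup B_3$, and in fact the ``singly degenerate'' case ($C\subseteq B_2$ but $C$ meeting several color-$3$ components) is vacuous: take any $v\in V\setminus B_2$ (nonempty since $|B_2|<n-2a$) and any $u\in C$; the triple on $\{u,v\}$ can be neither color $1$ (it crosses $C$ to $U$) nor color $2$ (it would merge $v$ into $B_2$), so it is color $3$, whence \emph{all} of $C$ lies in $v$'s color-$3$ component. So you never reach the sub-subcase in which each color-$3$ component meets $C$ in at most $a$ vertices --- which is fortunate, because the greedy ``split the color-$3$ components into two families each of combined intersection $\geq a+1$'' step you propose there does not work in general (with pieces all of size $\leq a$ this requires $|C|\geq 3a+1$, which your hypotheses do not supply).

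The real gap is the non-degenerate case. You assert that the failure to find two $(a+1)$-size classes of $C$ differing in both coordinates forces $C$ into a single color-$2$ or single color-$3$ component, but this implication is never argued and is not self-evident. A priori, $C$ could meet several color-$2$ components and several color-$3$ components while every ``diagonal'' pair of joint-type classes has one small member (e.g., classes of types $(B_1,E_1),(B_2,E_1),(B_1,E_2)$, the first two of size $\leq a$ and the last tiny). Such configurations do impose strong constraints on $U$ --- for any two classes $(B,E),(B',E')\subseteq C$ with $B\neq B'$ and $E\neq E'$ one gets $U\subseteq (B\cap E')\cup (B'\cap E)$, which can then be leveraged --- but exploiting this requires several further subcases (depending on how $|U|$ splits between the two pieces), and in some of them the natural third set must be built from $C\setminus B$ and $C\setminus E'$ with care about their overlap. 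None of this is in your sketch, and it is exactly the work that the paper's Lemma~\ref{lem:3col} packages away by providing the $W,X,Y,Z$ decomposition of $K_n$ up front. In short: your degenerate-case machinery is correct (indeed slightly redundant, since singly degenerate collapses to doubly degenerate), but the non-degenerate case is not actually reduced to the degenerate one, so as written the argument is incomplete.
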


In Theorem \ref{n-a*_lower} we will prove a stronger statement which, together with Proposition \ref{n-a*_upper}, shows that either $\mc_3(S)=n-\alpha^*_3(S)$, or else the structure of $S$ and the coloring of its edges are restricted in a certain way.  As a corollary of Theorem \ref{n-a*_lower} we recover Theorem \ref{gy_thm}(i).



We then use Theorem \ref{thm_main} together with some recent methods developed by Kwan \cite{Kwan} to prove that for almost every $S\in \mathcal{S}_n$, we have $\mc_3(S)=(1-o(1))n$. 

\begin{theorem}\label{thm_random}
There exists a constant $\delta>0$ such that for $n\equiv 1,3\bmod 6$,  if $S\in \mathcal{S}_n$ is chosen uniformly at random, then a.a.s. 
\[
\mc_3(S)\geq n-2n^{1-\delta}.
\]
\end{theorem}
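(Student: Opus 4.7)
The plan is to combine Theorem \ref{thm_main} with a high-probability upper bound on $\alpha_3^*(S)$ for a uniformly random $S \in \mathcal{S}_n$: since $\mc_3(S) \geq n - 2\alpha_3^*(S)$, it suffices to show that $\alpha_3^*(S) \leq n^{1-\delta}$ a.a.s.\ for some $\delta > 0$. Set $m := \lceil n^{1-\delta}\rceil$ with $\delta \in (0, 1/2)$ to be chosen. I would run a first-moment bound on the number of ordered triples $(X_1, X_2, X_3)$ of pairwise disjoint $m$-subsets of $[n]$ that form a 3-partite hole; the number of such triples is at most $\binom{n}{m}^3 \leq (en/m)^{3m}$.

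For a fixed $(X_1, X_2, X_3)$, being a 3-partite hole means that none of the $m^3$ cross-triples $\{x_1, x_2, x_3\} \in X_1 \times X_2 \times X_3$ is an edge of $S$. Each fixed triple lies in $S$ with probability $1/(n-2)$, so the expected number of cross-edges is $\mu \approx m^3/n$. The key technical step is to establish
\[
\Pr\bigl[(X_1,X_2,X_3)\text{ is a 3-partite hole in }S\bigr] \leq \exp(-c\,m^3/n)
\]
for an absolute constant $c > 0$. For this I would use Kwan's coupling \cite{Kwan} between a uniform random STS and the triangle removal process on $K_n$: one iteratively deletes a uniformly random available triangle until none remain, and a successful run produces a uniform STS a.a.s. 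Tracking the number of cross-triples among the currently available triangles throughout the process---and, via the martingale/differential-equation arguments developed by Kwan, showing that their proportion stays close to the initial value $\sim 6m^3/n^3$---yields a Chernoff-type concentration for the total number of cross-edges selected, and in particular the exponential lower tail above.

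Granted the single-hole bound, the first moment is
\[
\mathbb{E}\bigl[\#\{\text{3-partite holes of size }m\}\bigr] \leq (en/m)^{3m}\exp(-c\,m^3/n),
\]
whose logarithm is $3m\log(en/m) - c\,m^3/n = O(m\log n) - c\,n^{2-3\delta}$. For any $\delta < 1/2$ this tends to $-\infty$, so by Markov $\alpha_3^*(S) \leq m$ a.a.s., and Theorem \ref{thm_main} then gives $\mc_3(S) \geq n - 2n^{1-\delta}$ a.a.s., as desired.

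The main obstacle is the single-hole exponential tail bound. A naive Janson-type inequality is insufficient because in a random STS the pair-correlation sum $\sum_{T \neq T'} \Pr[T, T' \in S]$ taken over consistent distinct cross-triples is of order $\mu^2$ rather than $\mu$, yielding only $\exp(-\Omega(1))$. The heart of the argument is therefore the dynamic analysis of the triangle removal process---showing that the density of cross-triples among surviving triangles remains robust under the dynamics long enough to guarantee selection of at least one with overwhelming probability---which is precisely what Kwan's recent techniques enable.
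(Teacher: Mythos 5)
The high-level strategy is the same as the paper's: reduce to showing $\alpha_3^*(S) \leq n^{1-\delta}$ a.a.s.\ and then apply Theorem~\ref{thm_main}. However, the proposed path to that bound has a genuine gap in where the union bound is applied relative to Kwan's transfer machinery, and this gap is fatal to the argument as written. You claim a single-hole tail bound of the form $\Pr\bigl[(X_1,X_2,X_3)\text{ is a hole in }S\bigr]\le\exp(-c\,m^3/n)$ \emph{for a uniformly random STS} $S$, and you then want to union bound over the $\le 2^{3n}$ choices of $(X_1,X_2,X_3)$. But Kwan's transfer theorem (Theorem~\ref{Kwan2.4} in the paper) does not produce super-polynomially small failure probabilities for the uniform STS: no matter how strong the input bound on the triangle-removal side is, the output probability bound on the uniform-STS side is only $\exp(-\Omega(n^{1-2a}))$, which is nowhere near small enough to survive a union bound over $2^{3n}$ triples. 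In short, one cannot ``transfer then union bound''; the union bound has to happen \emph{before} transferring. The paper does exactly this: it packages the statement ``every triple of disjoint $n^{1-\delta}$-sets is crossed'' as a \emph{single} monotone property $\mathcal{P}$, performs the union bound inside the source distribution, verifies $\Pr(\text{failure})\le\exp(-n^{2-b})$ there, and only then invokes Theorem~\ref{Kwan2.4} to transfer $\mathcal{P}$ as a whole.

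Two further points. First, the claim that ``a successful run produces a uniform STS a.a.s.''\ is incorrect: the triangle removal process, conditioned on reaching a complete system, does not produce a uniformly distributed STS; correcting this requires precisely the delicate coupling analysis in Kwan's paper, and it is what costs the loss in the exponent in Theorem~\ref{Kwan2.4}. Second, even once you relocate the union bound to the source distribution, a martingale/differential-equation analysis of the triangle removal process dynamics is not needed and is considerably harder than what is required: the paper instead routes through the binomial-with-deletions model $\mathbb{G}^*(n,\tfrac{1}{2n})$ via Kwan's Lemma~\ref{Kwan2.10}, where the cross-edge count is a $3$-Lipschitz function of i.i.d.\ indicators, so Warnke's bounded-differences inequality (Lemma~\ref{concentration}) immediately gives the required $\exp(-\Omega(n^{2-6\delta}))$ lower tail with essentially no dynamic tracking. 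You should restructure along these lines; the remaining computations in the paper's proof are essentially the ones you already have.
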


In Corollary \ref{generalupper}, we observe that for all $S\in \mathcal{S}_n$, $\mc_3(S)\leq n-\frac{.417}{3}\sqrt{n\log n}$.  Thus it would be interesting to determine whether the lower bound in Theorem \ref{thm_random} can be improved to $n-O(\sqrt{n\log n})$ (see Problem \ref{prob_aas}).

Additionally, in the process of proving Theorem \ref{thm_random}, we answer a problem raised in \cite[Problem 8]{G2} (see Theorem \ref{discrepancy} and the discussion which follows for more details) by showing that for all sufficiently large $n\equiv 1,3\bmod{n}$, there exists $S\in \mathcal{S}_n$ such that $\alpha^*(S)$ is small (sublinear) and there exists $S\in \mathcal{S}_n$ such that $\alpha^*(S)$ is large (linear).
%

\subsection{Overview of the paper}

In Section \ref{sec:gen}, we prove some general results giving bounds on $\mc_r(H)$ for $H$ an $r$-uniform hypergraph. We deduce an upper bound on $\mc_3(S)$ for all Steiner triple systems $S$.  We also prove a result which we will later use to deduce lower bounds on $\mc_3(S)$. We additionally give results on $\mc_r(H)$ when $H$ is a random $r$-uniform hypergraph.

In Section \ref{sec:random}, we prove Theorem \ref{thm_random}, that almost all Steiner triple systems $S$ satisfy $\mc_3(S)=(1-o(1))|S|$.

In Section \ref{sec:lower}, we prove an absolute lower bound on $\mc_3(S)$ for all Steiner triple systems $S$, namely that $\mc_3(S) \geq \frac{2|S|}{3} +1$.

In Section \ref{sec:upper}, we prove that there exists an infinite family of Steiner triple systems $\mathcal{S}$ such that $\mc_3(S)=(2/3+o(1))|S|$ for all $S\in \mathcal{S}$.  

In Section \ref{sec:boseskolem}, we discuss bounds on $\mc_3(S)$ for Bose and Skolem triple systems $S$.

Finally, in Section \ref{sec:conc}, we raise a number of Ramsey-type problems in the setting of Steiner triple systems.

\section{General bounds for hypergraphs}\label{sec:gen}

An independent set in a hypergraph $H$ is a set $A\subseteq V(H)$ such for all $e\in E(H)$, $e\not\subseteq A$. 
Given a hypergraph $H$, let $\alpha(H)$ be the maximum integer $a$ such that $H$ contains an independent set of order $a$. Note that for any $k$-uniform hypergraph $H$ we have $\alpha_k^*(H)\geq \floor{\alpha(H)/k}$, since an independent set in $H$ of order $\alpha(H)$ contains $k$ disjoint sets of each of order $\floor{\alpha(H)/k}$.  However, there are hypergraphs $H$ for which $\alpha^*_k(H)$ is arbitrarily larger than $\alpha(H)$.  For instance, take $k$ disjoint sets of size $n/k$ and add all edges except those which touch all $k$ of the sets, then $\alpha(H)=k$, but $\alpha^*_k(H)=n/k$. 


Note that for all $n\equiv 1,3\bmod 6$ such that $n>3$ and all $S\in \mathcal{S}_n$, we trivially have $\alpha_3^*(S)\geq 1$ and thus $\mc_3(S)\leq n-1$ (c.f. $\mc_3(K^3_n)=n$).  Phelps and R\"odl \cite{PR} showed that there exists a constant $c$ such that for all $S\in \mathcal{S}_n$, $\alpha(S)\geq c\sqrt{n \log n}$ (in fact they proved this for all linear 3-uniform hypergraphs).  The constant was improved in \cite{DLR} and improved further to $.417$ in \cite{KMV}.  Combining this lower bound with Proposition \ref{n-a*_upper} we get the following corollary.

\begin{corollary}\label{generalupper}
For all $S\in \mathcal{S}_n$, $\mc_3(S)\leq n-\alpha_3^*(S)\leq n-\frac{.417}{3}\sqrt{n\log n}$.
\end{corollary}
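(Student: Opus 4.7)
The statement is essentially an immediate chaining of three already-stated ingredients, so the plan is to make each link explicit rather than to introduce new machinery.

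First, I would invoke Proposition \ref{n-a*_upper} with $k=3$. Since any Steiner triple system $S\in\mathcal{S}_n$ is a $3$-uniform hypergraph, the proposition applies verbatim and yields the first inequality $\mc_3(S)\leq n-\alpha_3^*(S)$. No further work is needed for this half.

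For the second inequality, I would use the general observation recorded in the paragraph just before the corollary: for any $3$-uniform hypergraph $H$ on $n$ vertices, an independent set of size $\alpha(H)$ may be partitioned into three pairwise disjoint blocks of size $\lfloor \alpha(H)/3\rfloor$, witnessing $\alpha_3^*(H)\geq \lfloor \alpha(H)/3\rfloor$. Specialising to $H=S\in\mathcal{S}_n$ gives $\alpha_3^*(S)\geq \lfloor \alpha(S)/3\rfloor$.

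Finally, I would apply the Kwan--Morris--Venkatesan-type lower bound cited as \cite{KMV}, which guarantees $\alpha(S)\geq .417\sqrt{n\log n}$ for every Steiner triple system on $n$ vertices. Substituting this into the previous inequality gives $\alpha_3^*(S)\geq \frac{.417}{3}\sqrt{n\log n}$ (up to a harmless $\pm 1$ which is absorbed by the rounding), and subtracting from $n$ yields the second inequality $n-\alpha_3^*(S)\leq n-\frac{.417}{3}\sqrt{n\log n}$.

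There is no real obstacle here; the only minor point to note is that the floor in $\alpha_3^*(S)\geq \lfloor \alpha(S)/3\rfloor$ costs at most a constant, which is absorbed by the asymptotic bound since $\sqrt{n\log n}\to\infty$. Thus the corollary follows by concatenation, and I would present it as a two-line deduction citing Proposition \ref{n-a*_upper}, the partition observation, and \cite{KMV}.
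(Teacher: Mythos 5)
Your proof is correct and matches the paper's own two-line derivation: Proposition~\ref{n-a*_upper} gives the first inequality, and the bound $\alpha_3^*(S)\geq\lfloor\alpha(S)/3\rfloor$ together with the $.417\sqrt{n\log n}$ lower bound on $\alpha(S)$ from~\cite{KMV} gives the second. One trivial slip: \cite{KMV} is Kostochka, Mubayi, and Verstra\"ete, not ``Kwan--Morris--Venkatesan''.
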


Grable, Phelps, and R\"odl \cite{GPR} proved that the lower bound on the independence number given above is best possible up to the constant term; that is, there exists an infinite family of Steiner triple systems $\mathcal{S}$ and a constant $C$ such that $\alpha(S)\leq C\sqrt{|S|\log |S|}$, for all $S\in \mathcal{S}$ (in \cite{EV}, it is mentioned that one can take $C=\sqrt{3}$). 

More recently, Linial and Luria \cite[Theorem 1.6]{LL} proved (using different terminology) that exists an infinite family of Steiner triple systems $\mathcal{S}$ such that $\alpha_3^*(S)\leq 100\sqrt{|S|\log |S|}$ for all $S\in \mathcal{S}$.


Given a hypergraph $H$ and $\{u,v\}\in \binom{V(H)}{2}$, let $d(u,v)=|\{S\in E(H): \{u,v\}\subseteq S\}|$ and let $\delta_2(H)=\min\{d(u,v): \{u,v\}\in \binom{V(H)}{2}\}$. We will prove a lower bound on $\mc_3(H)$ for all 3-uniform hypergraphs $H$ with $\delta_2(H)\geq 1$ (i.e. every pair of vertices is contained in at least one edge).  We use the following lemma from \cite{DM} which we reproduce here for completeness (for technical reasons we allow for edges to receive multiple colors, so we have modified the statement of the lemma accordingly).

\begin{lemma}
  \label{lem:3col}
Let $K$ be a complete graph. For every 3-multicoloring of the edges of $K$, either
\begin{enumerate}[label = {\rm(L$_{\arabic*}$)}]
	\item \label{itm:L1} there exists a spanning monochromatic component, or
	\item \label{itm:L2} there exists a partition $\{W, X, Y, Z\}$ of $V(K)$ (all parts non-empty),
such that every edge in $[W,X]$ and $[Y,Z]$ is colored only with blue, every edge in $[W,Y]$ and
$[X,Z]$ is colored only with red, and every edge in $[W,Z]$ and $[X,Y]$ is
colored only with green.
	\item \label{itm:L3} there exists a partition $\{W, X, Y, Z\}$ of $V(K)$ with $X,Y,Z$ non-empty
such that $B:=W\cup X\cup Y$ is connected in blue, $R:=W\cup X\cup Z$ is
connected in red, and $G:=W\cup Y\cup Z$ is connected in green.  Furthermore, every edge in 
$[X,Y]$ is colored only with blue, every edge in $[X,Z]$ is colored only with red, and every edge in $[Y,Z]$ is colored only with green, whereas no edge in $[W,X]$ is green, no edge in $[W,Y]$ is red, and no edge in $[W,Z]$ is blue.
\end{enumerate}
\end{lemma}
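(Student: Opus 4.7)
The plan is to assume (L1) fails and construct one of the partitions described in (L2) or (L3). Pick any vertex $v$ and let $B^{*}, R^{*}, G^{*}$ denote its blue, red, and green components. Since (L1) fails, each is a proper subset of $V(K)$, and since every edge at $v$ carries at least one color, $V(K) = B^{*} \cup R^{*} \cup G^{*}$. I would then partition $V(K)$ into up to seven parts according to membership pattern in $(B^{*}, R^{*}, G^{*})$: the triple intersection $W$, three double-intersection parts $X_1, X_2, X_3$ (each omitting exactly one of the three components), and three single-membership parts $Y_1, Y_2, Y_3$.

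The main technical tool is a maximality observation: if a part $P$ lies inside the color-$c$ component of $v$ and a part $P'$ is disjoint from it, then edges in $[P,P']$ cannot be colored $c$, else maximality of the component is violated. Applied systematically for each of the three colors, this pins down the allowed colors on every inter-part bipartite edge set, and in particular forces several of them to be ``color-$c$ only'' for a specific $c$ (deduced by combining the ``not $c$'' restrictions from maximality with the fact that each edge bears at least one color). A sharper consequence is that if a singleton part $Y_i$ and the oppositely-placed double part (the unique $X_j$ lying in the two components missing from $Y_i$) are both nonempty, then an edge between them would need to avoid all three colors simultaneously, a contradiction. So at most one element of each such ``forbidden pair'' can be nonempty.

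Case analysis on which of the seven parts are nonempty, subject to the forbidden-pair constraint and the requirement that $B^{*}, R^{*}, G^{*}$ all be proper, yields the desired partitions. In the clean case with all $Y_i$ empty, the three $X_i$ must all be nonempty (else some color spans) and $(W, X_1, X_2, X_3)$ directly satisfies (L3); in the clean case with all $X_i$ empty, $(W, Y_1, Y_2, Y_3)$ directly satisfies (L2). The remaining mixed configurations either force some color component to span $V(K)$ (contradicting the failure of (L1)) or can be recast, possibly with $W=\emptyset$ or with the surviving $X_i$ absorbed into a larger $X$-part, into a partition satisfying (L3). The main obstacle is these mixed cases: one must simultaneously verify the six types of inter-part color restrictions in (L3) and the three connectedness conditions for $B, R, G$, which may require either choosing the pivot $v$ more carefully to avoid problematic configurations or splitting an $X_i$ into sub-parts according to which cross-edge colors its vertices carry.
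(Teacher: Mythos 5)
Your decomposition into seven cells by membership in $(B^{*},R^{*},G^{*})$ is a genuinely different route from the paper. The paper instead does a \emph{two-step} refinement: it first fixes a maximal monochromatic component $B$ (maximal among monochromatic components of \emph{all} colors, not just the component of some chosen vertex) and its complement $U$, then picks a maximal component $R$ meeting both sides, and splits only on whether $U\setminus R$ is empty or not. Choosing $B$ maximal immediately gives $B\setminus R\neq\emptyset$, and then the two cases fall directly into \ref{itm:L2} and \ref{itm:L3} with no residual case analysis.

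The concrete gap in your proposal is exactly where you flag it: the mixed configurations. Take, for instance, the case where the surviving nonempty cells are $W$, $X_{BR}$ (in $B^{*}\cap R^{*}$, out of $G^{*}$), $Y_B$, $Y_R$. Your maximality observation only forbids certain colors on cross-edges; it does \emph{not} fix them. On $[X_{BR},Y_B]$ the only conclusion is ``not red,'' so edges there may be either blue or green, and similarly $[X_{BR},Y_R]$ may be red or green. Putting $X_{BR}$ wholesale into the $X$-part of \ref{itm:L3} would then require $[X,Y]$ to be blue-only, which can fail. To fix this one must locate the green component $G'$ spanning $Y_B\cup Y_R$ (every $[Y_B,Y_R]$ edge is forced green), observe $G'\cap W=\emptyset$, split $X_{BR}$ into $X_{BR}\cap G'$ and $X_{BR}\setminus G'$, and reassemble: set the new $W$ to be $X_{BR}\cap G'$, the new $X$ to be $W\cup(X_{BR}\setminus G')$, $Y=Y_B$, $Z=Y_R$. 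Note that this changes which vertex class plays the role of ``$W$''; in particular the original $v$ ends up in the new $X$, and in the degenerate sub-case $X_{BR}\cap G'=\emptyset$ the new $W$ is empty (which \ref{itm:L3} permits but your write-up does not explicitly admit, since your $W$ always contains $v$). You would need to carry out an analogous split-and-reassemble for each of the mixed patterns allowed by your forbidden-pair constraint, and verify both the six coloring constraints and the three connectedness constraints each time. None of that is done, and ``choosing the pivot $v$ more carefully'' does not avoid it: one can build small examples (e.g.\ on four vertices with one doubly-colored edge) in which \emph{every} vertex $v$ lands you in a mixed configuration. So the skeleton is sound but the argument as written is incomplete; the paper's iterative choice of maximal components is precisely how it avoids having to enumerate and repair these cases.
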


\begin{proof}
Suppose $B$ is a monochromatic, say blue, component which is maximal -- in the sense that it is not properly contained in a monochromatic component of any color -- and set $U=V(K)\setminus B$.
If $U=\emptyset$ then we are in case \ref{itm:L1}; so suppose not.  Note that none of the edges from $B$ to $U$ are colored blue.  Let $R$ be a maximal, say red, component which intersects both $B$ and $U$. By the maximality of $B$, we have $B\setminus R\neq \emptyset$. 

First suppose $U\setminus R\neq \emptyset$.  In this case, all of the edges in $[B\cap R,
U\setminus R]$ and $[B\setminus R, U\cap R]$ are colored only with green.  This
implies all of the edges in $[B\cap R, U\cap R]$ and $[B\setminus R, U\setminus R]$ are colored only with 
red, and all of the edges in $[B\cap R, B\setminus R]$ and $[U\cap R, U\setminus R]$ are colored only with 
blue.  So, setting $W:=B\cap R$, $X:=B\setminus R$, $Y:=U\cap R$,
and $Z:=U\setminus R$, we are in case \ref{itm:L2}.

Finally, suppose $U\setminus R=\emptyset$.  In this case the edges in $[B\setminus R, U]$ are colored only with green, so there is a maximal green component $G$ containing $U\cup
(B\setminus R)$.  Then, setting $W:=B\cap R\cap G$, $X:=B\setminus
G$, $Y:=B\setminus R$, and $Z:=U$, we are in case \ref{itm:L3}.
\end{proof}

\begin{theorem}\label{n-a*_lower}
For all $3$-uniform hypergraphs $H$ on $n$ vertices with $\delta_2(H)\geq 1$, either
\begin{enumerate}[label = {\rm(T$_{\arabic*}$)}]
\item\label{itm:T1}  $\mc_3(H)\geq n-\alpha^*_3(H)$, or

\item\label{itm:T2}  there exists a 3-coloring of the edges of $H$ and a partition $\{V_1, V_2, V_3, V_4\}$ of $V(H)$ with $|V_1|\geq |V_2|\geq |V_3|\geq |V_4|$, $\alpha^*_3(H)\geq |V_2|\geq |V_3|\geq |V_4|$, and consequently $|V_1|+|V_i|\geq n-2\alpha_3^*(H)$ for all $i\in \{2,3,4\}$ such that no triple touches three of the sets $V_1, V_2, V_3, V_4$, every triple which touches both $V_1$ and $V_2$ and every triple which touches both $V_3$ and $V_4$ is blue, every triple which touches both $V_1$ and $V_3$ and every triple which touches both $V_2$ and $V_4$ is red, and every triple which touches both $V_1$ and $V_4$ and every triple which touches both $V_2$ and $V_3$ is green.
\end{enumerate}

In particular, for all $3$-uniform hypergraphs $H$ on $n$ vertices with $\delta_2(H)\geq 1$, we have $\mc_3(H)\geq n-2\alpha^*_3(H)$.
\end{theorem}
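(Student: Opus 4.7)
The plan is to apply Lemma \ref{lem:3col} to the 3-multicoloring of $K_{V(H)}$ in which each pair $\{u,v\}$ receives the set of colors of triples of $H$ containing $\{u,v\}$; this multicoloring is well-defined precisely because $\delta_2(H)\geq 1$, and monochromatic components of the shadow multicoloring coincide with monochromatic components of $H$. To establish the dichotomy, I would start from a coloring of $E(H)$ whose largest monochromatic component has fewer than $n-\alpha^*_3(H)$ vertices (if no such coloring exists, (T1) already holds) and show that after labeling its parts by size and its colors appropriately, it realizes (T2). The lemma produces one of three cases, and \ref{itm:L1} is immediately ruled out since it gives a spanning monochromatic component.

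In case \ref{itm:L3}, a triple meeting all three of $X, Y, Z$ would have its $[X,Y]$-pair force it blue, its $[X,Z]$-pair force it red, and its $[Y,Z]$-pair force it green, which is impossible. Thus $X, Y, Z$ witness $\alpha^*_3(H)\geq \min(|X|,|Y|,|Z|)$. Simultaneously, the three sets $W\cup X\cup Y$, $W\cup X\cup Z$, $W\cup Y\cup Z$ each sit inside a monochromatic component, producing one of size at least $n-\min(|X|,|Y|,|Z|)\geq n-\alpha^*_3(H)$, contradicting the assumption.

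The main case, and the main obstacle, is \ref{itm:L2}. The same pair-forces-triple argument shows that no triple of $H$ meets three of $W, X, Y, Z$. Relabeling these parts as $V_1, V_2, V_3, V_4$ in non-increasing size order (and renaming the three color classes so that the color on the pair $[V_1, V_2]$ is called ``blue'', the color on $[V_1, V_3]$ is ``red'', and the color on $[V_1, V_4]$ is ``green''), the triple-color pattern of (T2) follows. The sharp lower bound $\alpha^*_3(H)\geq |V_2|$ (rather than the naive $\geq |V_4|$ from the four-part structure alone) comes from viewing $V_1, V_2, V_3\cup V_4$ as a 3-partite hole configuration: no triple meets all three (otherwise it would touch three of the original parts), so $\alpha^*_3(H)\geq \min(|V_2|, |V_3|+|V_4|)$. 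If $|V_2|\leq |V_3|+|V_4|$, then $\alpha^*_3(H)\geq |V_2|$ and the coloring realizes (T2). Otherwise $\alpha^*_3(H)\geq |V_3|+|V_4|$, and the blue component $V_1\cup V_2$ of size $n-|V_3|-|V_4|\geq n-\alpha^*_3(H)$ contradicts the assumption.

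Finally, the ``in particular'' bound $\mc_3(H)\geq n-2\alpha^*_3(H)$ drops out of the same case analysis without any contradiction setup: cases \ref{itm:L1} and \ref{itm:L3} give a monochromatic component of size at least $n-\alpha^*_3(H)$, while in case \ref{itm:L2} the blue component $V_1\cup V_2$ has size $n-|V_3|-|V_4|$, and one checks in either subcase above that $|V_3|+|V_4|\leq 2\alpha^*_3(H)$ (using $|V_3|,|V_4|\leq |V_2|$ when $\alpha^*_3(H)\geq |V_2|$, and directly when $\alpha^*_3(H)\geq |V_3|+|V_4|$).
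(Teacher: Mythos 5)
Your proposal is correct and takes essentially the same approach as the paper's proof: reduce to Lemma~\ref{lem:3col} on the shadow multicoloring, dispose of cases \ref{itm:L1} and \ref{itm:L3} directly (the latter via the 3-partite hole $X,Y,Z$), and in case \ref{itm:L2} use the 3-partite hole formed by $V_1$, $V_2$, $V_3\cup V_4$ to either obtain a component of size $\geq n-\alpha^*_3(H)$ or certify the \ref{itm:T2} structure. The only superficial difference is that you branch on $|V_2|\lessgtr |V_3|+|V_4|$ where the paper branches on $|V_2|\lessgtr\alpha^*_3(H)$, but both rest on the same hole and yield the same dichotomy.
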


\begin{proof}
Let $a_3:=\alpha_3^*(H)$.  Given a 3-coloring of the edges of $H$, let $K$ be the complete graph on $V(H)$ with a 3-multicoloring  obtained by coloring $uv\in E(K)$ with color $i$ if and only if there exists an edge $e$ of color $i$ in $H$ such that $\{u,v\}\subseteq e$.  Now apply Lemma \ref{lem:3col} to $K$.  

If we are in case \ref{itm:L1}, then we have a monochromatic component of order $n$ in $H$ which means \ref{itm:T1} holds; so suppose that we are not in case \ref{itm:L1}.

Suppose that we are in case \ref{itm:L2} and without loss of generality, suppose $|W|\geq |X|\geq |Y|\geq |Z|$.  Because of the structure of case \ref{itm:L2}, no edge from $H$ intersects three of the sets $W,X,Y,Z$ which implies $|Y|\leq a_3$.  If $|X|> a_3$, then again since no edge from $H$ intersects three of the sets $W,X,Y,Z$ we must have $|Y\cup Z|\leq a_3$, in which case there is a blue component on $|W|+|X|=n-|Y\cup Z|\geq n-a_3$ vertices which means \ref{itm:T1} holds; so suppose $|X|\leq a_3$.  In this case \ref{itm:T2} holds, as there is a blue component on $|W|+|X|=n-|Y|-|Z|\geq n-2a_3$ vertices, a red component on $|W|+|Y|=n-|X|-|Z|\geq n-2a_3$ vertices, and a green component on $|W|+|Z|=n-|X|-|Y|\geq n-2a_3$ vertices.

Finally, suppose we are in case \ref{itm:L3} and without loss of generality, suppose $|X|\geq |Y|\geq |Z|$.  Because of the structure of case \ref{itm:L3}, no edge from $H$ intersects all three of $X,Y,Z$ which implies $|Z|\leq a_3$.  But now there is a blue component in $H$ on $|W|+|X|+|Y|=n-|Z|\geq n-a_3$ vertices which means \ref{itm:T1} holds.
\end{proof}

Let $H_k(n,p)$ be the binomial random $k$-uniform hypergraph on $n$ vertices where each edge appears independently with probability $p$.  Krivelevich and Sudakov \cite{KS} proved that provided $p=\omega\left(\frac{1}{n^{k-1}}\right)$ and $p=o(1)$, then a.a.s.\
\begin{equation}\label{eqKS}
\alpha(H_k(n,p))= (1+o(1))\left(\frac{k!\log n}{p}\right)^{1/(k-1)}.
\end{equation}
Note that the lower bound is the difficult part of the above estimate, whereas the upper bound is a straightforward first moment calculation.  A similar first moment calculation (c.f. \cite[Section 3.2]{KMV}) shows that a.a.s.\
\begin{equation}\label{eqE}
\alpha_k^*(H_k(n,p))\leq \left(\frac{k\log n}{p}\right)^{1/(k-1)}.  
\end{equation}
And another straightforward calculation shows that if $c>2$ and $p>\frac{c\log n}{n}$, then a.a.s.\ we have $\delta_2(H_3(n,p))\geq 1$.  Thus we obtain the following corollary.  

\begin{corollary}~
\begin{enumerate}
\item If $p=\omega\left(\frac{1}{n^{k-1}}\right)$ and $p=o(1)$, then a.a.s.\ $\mc_k(H_k(n,p))\leq n-\frac{(1-o(1))}{k}\left(\frac{k!\log n}{p}\right)^{1/(k-1)}.$
\item If $c>2$ and $p>\frac{c\log n}{n}$, then a.a.s.\ $\mc_3(H_3(n,p))\geq n-2\left(\frac{3\log n}{p}\right)^{1/2}.$
\end{enumerate}
\end{corollary}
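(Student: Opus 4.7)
Both parts of the corollary are assembled from the deterministic inequalities already proved in this section together with the random-hypergraph estimates (\ref{eqKS}) and (\ref{eqE}); no new ingredient is needed.

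For part (i), the plan is to feed a lower bound on $\alpha_k^*(H_k(n,p))$ into Proposition \ref{n-a*_upper}. That proposition gives $\mc_k(H)\le n-\alpha_k^*(H)$ deterministically, so it suffices to bound $\alpha_k^*$ from below. I would use the observation at the start of Section \ref{sec:gen} that $\alpha_k^*(H)\ge \lfloor\alpha(H)/k\rfloor$, obtained by splitting any independent set arbitrarily into $k$ equal pieces. Under the hypotheses $p=\omega(n^{-(k-1)})$ and $p=o(1)$, the Krivelevich--Sudakov estimate (\ref{eqKS}) gives $\alpha(H_k(n,p))\ge (1-o(1))(k!\log n/p)^{1/(k-1)}$ a.a.s., and the $-1$ from the floor is swallowed by the $(1-o(1))$ factor since this quantity tends to infinity. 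Chaining yields $\mc_k(H_k(n,p))\le n-\tfrac{1-o(1)}{k}(k!\log n/p)^{1/(k-1)}$.

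For part (ii), the plan is to apply the ``in particular'' conclusion of Theorem \ref{n-a*_lower}, which says $\mc_3(H)\ge n-2\alpha_3^*(H)$ whenever $\delta_2(H)\ge 1$. To verify the hypothesis on a random 3-uniform hypergraph, I would carry out the first-moment computation alluded to just before the corollary: for any fixed pair $\{u,v\}$, the probability that no edge of $H_3(n,p)$ contains it is $(1-p)^{n-2}\le e^{-p(n-2)}\le n^{-c(1-o(1))}$ when $p>c\log n/n$, so a union bound over the $\binom{n}{2}$ pairs gives $\delta_2(H_3(n,p))\ge 1$ a.a.s.\ provided $c>2$. The first-moment bound (\ref{eqE}) gives $\alpha_3^*(H_3(n,p))\le (3\log n/p)^{1/2}$ a.a.s., and substituting this into Theorem \ref{n-a*_lower} produces the claimed lower bound.

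The proof is therefore almost purely a bookkeeping exercise; the only step that requires any actual work is the $\delta_2\ge 1$ check, and that is a standard tail-bound-plus-union-bound computation. Consequently there is no real obstacle, and the main care is just to ensure that the $(1-o(1))$ error terms in (i) really absorb both the floor and the $(1-o(1))$ from (\ref{eqKS}), which they do because the leading term tends to infinity under the stated hypothesis on $p$.
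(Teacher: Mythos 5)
Your proposal is correct and follows the paper's proof essentially step for step: part (i) chains $\alpha_k^*\ge\lfloor\alpha/k\rfloor$, the Krivelevich--Sudakov estimate \eqref{eqKS}, and Proposition~\ref{n-a*_upper}; part (ii) chains \eqref{eqE}, the a.a.s.\ verification of $\delta_2\ge 1$, and Theorem~\ref{n-a*_lower}. You spell out the $\delta_2$ union-bound computation a bit more explicitly than the paper does, but the route is the same.
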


\begin{proof}
\begin{enumerate}
\item We have $\alpha_k^*(H_k(n,p))\geq \floor{\frac{\alpha(H_k(n,p))}{k}}\stackrel{\eqref{eqKS}}{\geq} \frac{(1+o(1))}{k}\left(\frac{k!\log n}{p}\right)^{1/(k-1)}$ a.a.s.\ and thus the result follows from Proposition \ref{n-a*_upper}.

\item By \eqref{eqE}, we have $\alpha_3^*(H_3(n,p))\leq \left(\frac{3\log n}{p}\right)^{1/2}$ a.a.s.\  Also for $c>2$ and $p>\frac{c\log n}{n}$, we have $\delta_2(H_3(n,p))\geq 1$ a.a.s.\  Thus the result follows from Theorem \ref{n-a*_lower}.
\end{enumerate}
\end{proof}

Note that in \cite{BDDE}, the authors prove that if $p=\omega\left(\frac{1}{n^{k-1}}\right)$, then a.a.s.\ $\mc_k(H_k(n,p))\geq (1-o(1))n$. So the above corollary, for certain ranges of $p$, provides quantitative upper bounds for all $k$ and provides a more precise quantitative lower bound in the case $k=3$.

\section{Random Steiner triple systems}\label{sec:random}

In this section we prove Theorem \ref{thm_random}.  That is, we show that with probability approaching 1 as $n\to \infty$, a randomly chosen Steiner triple system $S \in \mathcal{S}_n$ has the property that $\mc_3(S)\geq (1-o(1))n$.  We obtain this as a consequence of Theorem \ref{thm_main} and the following discrepancy-type result.\footnote{We note that in a very recent paper, Ferber and Kwan implicitly prove a more general discrepancy theorem \cite[Theorem 8.1]{FK} which implies $\alpha_3^*(S) = o(n)$ a.a.s.\ for $S \in \mathcal{S}_n$ chosen uniformly at random.  While their discrepancy result is more general, the upper bound on $\alpha^*_3(S)$ which follows from their result would be weaker than in Theorem \ref{discrepancy}.}

\begin{theorem}\label{discrepancy}
There exists a constant $\delta>0$ such that for $n\equiv 1,3\bmod 6$ if $S\in \mathcal{S}_n$ is chosen uniformly at random, then a.a.s.\ 
\[
\alpha_3^*(S) \leq n^{1-\delta}.
\]
\end{theorem}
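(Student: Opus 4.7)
The plan is to combine a union bound over potential 3-partite holes with a discrepancy-type estimate bounding the probability that a uniformly random Steiner triple system completely misses a prescribed family of triples. Fix a constant $\delta>0$ to be chosen (the argument will force $\delta<1/2$, and $\delta=1/4$ works) and set $m=\lceil n^{1-\delta}\rceil$. The number of ordered triples $(X_1,X_2,X_3)$ of pairwise disjoint $m$-subsets of $[n]$ is at most $\binom{n}{m}^3\le\exp(3m\log n)$, so by the union bound it suffices to prove that for every fixed such $(X_1,X_2,X_3)$,
\[
\Pr\bigl[(X_1,X_2,X_3)\text{ is a 3-partite hole in }S\bigr]\le\exp(-\omega(m\log n)).
\]

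For any fixed disjoint $X_1,X_2,X_3$ of size $m$, let $\mathcal{F}=\{e\in\binom{[n]}{3}:|e\cap X_i|=1\text{ for every }i\in[3]\}$, so $|\mathcal{F}|=m^3$, and the target event is exactly $\{E(S)\cap\mathcal{F}=\emptyset\}$. By vertex-transitivity of the uniform measure on $\mathcal{S}_n$, every fixed triple is an edge of $S$ with marginal probability $1/(n-2)$, so $\mathbb{E}|E(S)\cap\mathcal{F}|=m^3/(n-2)$. Heuristically, pretending the edges were independent, one expects
\[
\Pr[E(S)\cap\mathcal{F}=\emptyset]\le\exp(-c\,m^3/n)
\]
for an absolute constant $c>0$. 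Given such a bound the total failure probability is at most $\exp(3m\log n-c\,m^3/n)=\exp(3n^{1-\delta}\log n-c\,n^{2-3\delta})$, which is $o(1)$ so long as $2-3\delta>1-\delta$, i.e.\ $\delta<1/2$.

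The main work is to establish the displayed discrepancy estimate rigorously, since the edges of a uniformly random Steiner triple system are strongly correlated and standard Chernoff-type inequalities do not apply. I would appeal to the methods of Kwan \cite{Kwan}, whose coupling between a variant of the triangle-removal process and the uniform distribution on $\mathcal{S}_n$ lets one analyze events under the much more tractable incremental random process. Concretely, at each step of the process the conditional probability of selecting a triple from $\mathcal{F}$ is roughly $|\mathcal{F}_{\mathrm{avail}}|/|\mathcal{E}_{\mathrm{avail}}|$, and iterating these conditional probabilities over the entire run of the process produces an exponential suppression factor of order $\exp(-\Theta(m^3/n))$, matching the independent heuristic.

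The principal obstacle is precisely this discrepancy step: one must maintain tight control on the evolution of $|\mathcal{F}_{\mathrm{avail}}|$ and $|\mathcal{E}_{\mathrm{avail}}|$ throughout the process (the former being the hard part, as it requires showing that the process ``uniformly'' chips away at $\mathcal{F}$ rather than accidentally preserving it), and then to transfer the resulting bound from the process distribution to the uniform distribution on $\mathcal{S}_n$ via Kwan's coupling. Once this is carried out, plugging in any fixed $\delta<1/2$ (say $\delta=1/4$) yields the statement of the theorem.
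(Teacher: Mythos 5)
Your high-level plan---union bound over the $\binom{n}{m}^3$ candidate triples $(X_1,X_2,X_3)$ plus a per-triple ``hole avoidance'' estimate, powered by Kwan's coupling---matches the spirit of what the paper does, and you correctly identify both the heuristic rate $\exp(-\Theta(m^3/n))$ and the central difficulty. But the proposal does not actually prove the statement: the discrepancy estimate you need is left as an acknowledged gap (``the principal obstacle is precisely this discrepancy step''), and you describe only how you \emph{would} track $|\mathcal{F}_{\mathrm{avail}}|$ and $|\mathcal{E}_{\mathrm{avail}}|$ through the triangle-removal process, which is exactly the technical content of such a proof and is nowhere carried out.

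There is also a structural misunderstanding of how Kwan's machinery is meant to be applied, which is where the paper diverges from your plan. You propose to bound the per-triple hole probability in the uniform measure on $\mathcal{S}_n$ by running the triangle-removal process step by step and transferring the result with a coupling. The paper instead invokes two pre-packaged transfer results---Theorem~2.4 of \cite{Kwan}, which transfers monotone properties from the triangle-removal process to the uniform measure, and Lemma~2.10 of \cite{Kwan}, which transfers from the binomial model $\mathbb{G}^*(n,\frac{1}{2n})$ to the triangle-removal process. The combined effect is that one never analyzes the triangle-removal process directly at all: it suffices to show the \emph{joint} property (``every triple of disjoint $n^{1-\delta}$-sets is crossed by some edge of the first half of the system'') fails with probability at most $\exp(-n^{2-b})$ in $\mathbb{G}^*(n,\frac{1}{2n})$, where edges are essentially independent. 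There the paper computes that the expected number of crossing edges is $\sim\frac{e^{-3/2}}{2}n^{2-3\delta}$, observes that the count is $3$-Lipschitz in the underlying Bernoulli variables, and applies the concentration inequality of Lemma~\ref{concentration} to get $\exp(-\Omega(n^{2-6\delta}))$ (note this is the variance-scale rate, weaker than your independent heuristic $\exp(-\Omega(n^{2-3\delta}))$, but still sufficient). The union bound is then executed \emph{inside} $\mathbb{G}^*$, over $\le 2^{3n}$ choices, before a single application of the transfer theorems. Doing the union bound first in the uniform measure, as you propose, and then applying Kwan's transfer per triple does not work cleanly: Theorem~2.4 only outputs a fixed polynomial error rate $\exp(-\Omega(n^{1-2a}))$ regardless of how small the process probability was, so you would need a delicate tuning of $a$ and $b$ that the paper sidesteps entirely by union bounding first. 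Finally, you omit the monotonicity bookkeeping that Theorem~2.4 requires (the property must be monotone in the first $\frac{1}{6}\binom{n}{2}$ edges); the paper handles this by showing a crossing edge appears already among the first half of the triples.
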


\begin{corollary}\label{cor:random}
There exists a constant $\delta>0$ such that for $n\equiv 1,3\bmod 6$ if $S\in \mathcal{S}_n$ is chosen uniformly at random, then a.a.s.\ 
\[
\mc_3(S)\geq n-2n^{1-\delta}.
\]
\end{corollary}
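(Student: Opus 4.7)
The plan is to deduce the corollary immediately by combining the two ingredients already established in the excerpt: Theorem \ref{thm_main}, a deterministic lower bound stating that $\mc_3(S) \geq n - 2\alpha_3^*(S)$ for every Steiner triple system $S$, and Theorem \ref{discrepancy}, a probabilistic upper bound on the $3$-partite hole number of a uniformly random $S \in \mathcal{S}_n$. First I would fix $\delta > 0$ to be the constant supplied by Theorem \ref{discrepancy}. For $S$ drawn uniformly at random from $\mathcal{S}_n$, that theorem guarantees that asymptotically almost surely $\alpha_3^*(S) \leq n^{1-\delta}$. On this a.a.s.\ event, applying Theorem \ref{thm_main} yields
\[
\mc_3(S) \;\geq\; n - 2\alpha_3^*(S) \;\geq\; n - 2n^{1-\delta},
\]
which is exactly the desired conclusion.

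Because Theorem \ref{thm_main} is a universal (deterministic) inequality valid on every outcome, the event that the corollary fails is contained in the event $\{\alpha_3^*(S) > n^{1-\delta}\}$, whose probability is $o(1)$ by Theorem \ref{discrepancy}. There is no genuine obstacle at the level of this corollary; the real difficulty of the project is concentrated entirely in the two inputs being cited. Theorem \ref{thm_main} is structural and relies on the case analysis in Lemma \ref{lem:3col}, while the hard probabilistic content—understanding the typical $3$-partite hole number of a random Steiner triple system—lives in Theorem \ref{discrepancy}, whose proof (appearing in Section \ref{sec:random}) is where the random Steiner triple system machinery of Kwan is deployed. Thus the proof of the corollary itself amounts to little more than stringing the two implications together and taking a union bound with the trivial event of probability $0$.
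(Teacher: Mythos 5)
Your proof is correct and is exactly the paper's approach: the corollary is stated as an immediate consequence of Theorem \ref{thm_main} (the deterministic bound $\mc_3(S) \geq n - 2\alpha_3^*(S)$) and Theorem \ref{discrepancy} (a.a.s.\ $\alpha_3^*(S) \leq n^{1-\delta}$), and the paper gives no further argument beyond chaining the two together.
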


Gy\'arf\'as \cite[Problem 8]{G2} asks if $\displaystyle \min_{S\in \mathcal{S}_n} \alpha_3^*(S)$ is significantly smaller than $\displaystyle \max_{S\in \mathcal{S}_n} \alpha_3^*(S)$.  When $n\equiv 3\bmod 18$, Gy\'arf\'as proves that $\displaystyle \max_{S\in \mathcal{S}_n} \alpha_3^*(S)=\frac{n}{3}-1$; furthermore, for all $n\equiv 1,3 \bmod 6$, there exists $S\in \mathcal{S}_n$ with $\alpha^*_3(S)\geq 2\floor{n/9}$ (see Section \ref{sec:boseskolem}). So Theorem \ref{discrepancy} provides an affirmative answer to \cite[Problem 8]{G2} for all sufficiently large $n\equiv 1,3\bmod 6$.

We prove Theorem \ref{discrepancy} using the recent results of Kwan \cite{Kwan} which say that if one can show that a particular property happens with extremely high probability in an appropriately defined random 3-uniform hypergraph, then it also happens with high probability in a randomly chosen Steiner triple system. Before making this precise we need several definitions.

A {\em partial Steiner triple system} is a linear $3$-uniform hypergraph; that is, a $3$-uniform hypergraph in which every pair of vertices is contained in at most one edge.  Let $\mathcal{S}_{n,m}$ be the set of partial systems on $n$ vertices which have $m$ edges. Given a partial Steiner triple system, we may order its edges, and we let $\mathcal{O}_n$ be the set of ordered Steiner triple systems on $n$ vertices and $\mathcal{O}_{n,m}$ be the set of ordered partial Steiner triple systems on $n$ vertices with $m$ edges. Given $S\in \mathcal{O}_{n,m}$ and $i\leq m$ we let $S_i$ be the ordered partial system consisting of the first $i$ edges of $S$. 

Next we define two random processes which we will relate to choosing a random Steiner triple system. 

First, the {\em triangle removal process} is a distribution on $\mathcal{O}_{n,m} \cup \{*\}$. We start with the complete graph $K_n$ and iteratively delete a triangle chosen uniformly from all triangles remaining in the graph. We continue this process until $m$ triangles are removed or there are no more triangles. If the process stops before $m$ triangles are removed, then the output is $``*"$ and otherwise the output is the ordered partial system in $\mathcal{O}_{n,m}$ given by the $m$ deleted triangles in the order they were deleted. Denote this resulting distribution as $\mathbb{R}(n,m)$. 

Second, given an edge probability $p$ let $\mathbb{G}(n,p)$ be the random distribution on $3$-uniform hypergraphs given by independently selecting each triple of $[n]$ with probability $p$. Let $\mathbb{G}^*(n,p)$ be the distribution on partial systems given by choosing a graph from $\mathbb{G}(n,p)$ and deleting all edges which intersect another edge in more than $1$ vertex. 

To analyze what happens in a randomly chosen Steiner triple system, we analyze what happens in $\mathbb{G}(n,p)$ using the following results of Kwan \cite{Kwan}.

\begin{theorem}[Theorem 2.4 of \cite{Kwan} using $\mathcal{Q} = \mathcal{O}_n$ and $\alpha=1/2$]\label{Kwan2.4}
Fixing a sufficiently small $a>0$, there exists $b = b_a > 0$ such that the following holds. Let $\mathcal{P} \subset \mathcal{O}_{n,\frac{1}{6}\binom{n}{2}}$ be a monotone property of ordered partial systems. Let $\mathbf{S} \in \mathcal{O}_n$ be a uniformly random ordered Steiner triple system and let $\mathbf{S'} \in \mathbb{R}(n, \frac{1}{6}\binom{n}{2})$. If $\mathbf{S'} = *$, say that $\mathbf{S'} \not \in \mathcal{P}$. If 
\[
\mathrm{Pr}(\mathbf{S'} \not\in \mathcal{P}) \leq \mathrm{exp}\left(-n^{2-b}\right),
\]
then 
\[
\mathrm{Pr}(\mathbf{S}_{\frac{1}{6}\binom{n}{2}} \not\in \mathcal{P}) \leq \mathrm{exp}\left(-\Omega \left(n^{1-2a}\right)\right).
\]
\end{theorem}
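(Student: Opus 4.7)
The plan is to prove the theorem by coupling a uniformly random ordered Steiner triple system $\mathbf{S} \in \mathcal{O}_n$ with the output $\mathbf{S}'$ of the triangle removal process $\mathbb{R}(n, \frac{1}{6}\binom{n}{2})$, and then transferring the failure probability for a monotone property $\mathcal{P}$ from $\mathbf{S}'$ to the first $m := \frac{1}{6}\binom{n}{2}$ edges of $\mathbf{S}$. Note that $m$ is exactly half of the total edge count of a full Steiner triple system on $n$ vertices, which is the regime where strong counting estimates for partial designs are available.

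First, I would analyze the single-step transition probabilities of each process. At step $i$, the triangle removal process picks a uniformly random triangle from the ``leave graph'' obtained from $K_n$ by deleting the edges already covered. In contrast, the $i$-th edge of $\mathbf{S}$ is distributed according to the number of ways to complete the resulting partial Steiner triple system to a full one. To compare the two distributions I would use Keevash-style counting estimates for the number of completions of a well-behaved partial system, which provide uniform asymptotics as long as the partial system is ``quasirandom'' in an appropriate sense.

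The core technical step is a quantitative comparison between the induced distributions on $\mathcal{O}_{n,m}$: one needs to bound the ratio of $\Pr(\mathbf{S}_m = T)$ to $\Pr(\mathbf{S}' = T)$ for ``typical'' partial systems $T$. Kwan's approach is built on entropy and switching arguments, together with absorber-based completion results, to control this ratio up to a factor of $\exp(O(n^{2-c}))$ for some $c>0$. Combined with the hypothesis $\Pr(\mathbf{S}' \notin \mathcal{P}) \leq \exp(-n^{2-b})$, and monotonicity (which lets us deduce failure for $\mathbf{S}_m$ from failure for the induced length-$m$ initial segment of a full Steiner triple system), this yields the weakened bound $\exp(-\Omega(n^{1-2a}))$ claimed in the conclusion. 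The exponent loss arises from absorbing the ``bad'' partial systems that are not quasirandom, and from the polynomial slack built into the entropy comparison.

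The main obstacle is establishing the quantitative comparison between the two distributions with a sufficiently controlled error term. The triangle removal process can pass through atypical configurations whose continuations to full Steiner triple systems are delicate to count, and proving that their contribution is negligible requires tight design enumeration bounds and a careful absorber construction. Monotonicity of $\mathcal{P}$ is essential: it allows us to conclude $\mathbf{S}_m \notin \mathcal{P}$ from $\mathbf{S} \notin \mathcal{P}'$ for a related monotone property $\mathcal{P}'$, bridging the partial-to-full transition that would otherwise be the crux of the argument. I would expect most of the effort to go into this coupling and its quantitative error analysis, with the rest of the argument being relatively routine book-keeping.
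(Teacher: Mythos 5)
The theorem you are trying to prove is not proved in this paper at all: the authors cite it verbatim from Kwan's work (it is labeled ``Theorem 2.4 of \cite{Kwan}'' with the parameters $\mathcal{Q} = \mathcal{O}_n$, $\alpha = 1/2$ substituted in), and the body of the paper contains no argument for it. It is a black-box import of one of the central technical results of \cite{Kwan}, whose proof occupies a substantial portion of that paper. So there is no ``paper's own proof'' to compare against, and it was not intended that a reader of the present article be able to reconstruct this result.

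As for the proposal itself: it is a reasonable high-level gloss on the ideas in Kwan's proof (coupling with the triangle removal process, step-by-step comparison of transition probabilities, entropy and completion-counting bounds in the style of Keevash, absorbers, and the role of monotonicity in transferring between partial and full systems). But it is a description of a strategy, not a proof. Every load-bearing step is left gestural: you do not define the quasirandomness condition under which the completion counts are comparable, you do not state or prove the required counting estimates for the number of Steiner triple systems extending a given partial system, you do not explain how the absorber construction produces those estimates, and you do not carry out the comparison between the two step-$m$ distributions or explain where, quantitatively, the drop from $\exp(-n^{2-b})$ to $\exp(-\Omega(n^{1-2a}))$ comes from. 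These are not points of routine book-keeping; they constitute essentially the entire technical content of Kwan's theorem. If you want to engage with this material, the right move is to read and summarize the relevant sections of \cite{Kwan} rather than to present a sketch as if it were a self-contained argument.
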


\begin{theorem}[Lemma 2.10 of \cite{Kwan} using $\alpha=1/2$ and $S = \emptyset$]\label{Kwan2.10}
Let $\mathcal{P}$ be a property of unordered partial systems that is monotone increasing (in the sense that $S\in \mathcal{P}$ and $S\subset S'$ implies $S'\in \mathcal{P}$). Let $\mathbf{S} \in \mathbb{R}(n, \frac{1}{6}\binom{n}{2})$ and $\mathbf{S^*} \in \mathbb{G}^*(n, \frac{1}{2n})$. Then 
\[
\mathrm{Pr}(\mathbf{S} \not\in \mathcal{P}) = O\left( \mathrm{Pr}(\mathbf{S^*} \not\in \mathcal{P})\right).
\]
\end{theorem}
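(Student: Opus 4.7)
The plan is to prove the result by exhibiting an explicit coupling of the two random partial systems under which $\mathbf{S^*} \subseteq \mathbf{S}$ holds with high probability. Given such a coupling, the monotonicity of $\mathcal{P}$ immediately yields the containment $\{\mathbf{S} \not\in \mathcal{P}\} \subseteq \{\mathbf{S^*} \not\in \mathcal{P}\} \cup \{\mathbf{S^*} \not\subseteq \mathbf{S}\}$, so the desired bound reduces to showing that the probability of coupling failure can be absorbed into $O(\mathrm{Pr}(\mathbf{S^*} \not\in \mathcal{P}))$ rather than contributing an additive term.

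The natural coupling is by thinning. First I would run the triangle removal process to obtain $\mathbf{S}$, which has exactly $m = \tfrac{1}{6}\binom{n}{2}$ edges. Then I would produce $\mathbf{T} \subseteq \mathbf{S}$ by retaining each edge of $\mathbf{S}$ independently with probability roughly $1/2$; since the expected edge count of $\mathbf{S^*}$ satisfies $\tfrac{1}{2n}\binom{n}{3} \approx m/2$, this choice matches the overall scale of $\mathbf{S^*}$. The central task is then to couple $\mathbf{T}$ with $\mathbf{S^*}$ so that $\mathbf{T} = \mathbf{S^*}$ with very high probability; combined with the automatic inclusion $\mathbf{T} \subseteq \mathbf{S}$, this delivers $\mathbf{S^*} \subseteq \mathbf{S}$ under the joint coupling.

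The main obstacle is establishing this distributional closeness, which requires fine quasi-randomness control on the triangle removal process: at each of the $m$ steps, the triangle selected for removal must be, conditional on the history, close to uniformly distributed across a large collection of available triples of $[n]$, and the pair-degrees in the residual graph must remain tightly concentrated around their deterministic trajectories throughout. This is the principal technical input, and it would be carried out via the Bohman--Frieze--Lubetzky differential-equation-method analysis, refined in Kwan's setup, using Freedman-type martingale concentration to track pair-degrees and codegree statistics.

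The final subtlety -- and the step I expect to be the hardest -- is arranging the coupling so that the failure event $\{\mathbf{S^*} \not\subseteq \mathbf{S}\}$ can itself be identified with a monotone sub-event of $\{\mathbf{S^*} \not\in \mathcal{P}\}$, so that the two error terms merge into a single constant-factor loss rather than adding. Pulling this off for an arbitrary monotone $\mathcal{P}$ (whose structure is otherwise unrestricted) requires designing the coupling in a property-agnostic way and checking that the bad set on which it fails is automatically "small" in whatever sense $\mathcal{P}$ fails, which is the most delicate coordination in the argument.
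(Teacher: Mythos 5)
First, a point of comparison: this statement is not proved in the paper at all --- it is quoted from Kwan's paper (Lemma 2.10 of \cite{Kwan}), so your attempt has to be measured against that argument. The decisive defect in your plan is exactly the one you flag at the end and do not resolve: a coupling of your kind only yields $\Pr(\mathbf{S}\notin\mathcal{P})\le \Pr(\mathbf{S^*}\notin\mathcal{P})+\Pr(\text{coupling fails})$, and for an \emph{arbitrary} monotone $\mathcal{P}$ the first term can be incomparably smaller than any achievable coupling error (for $\mathcal{P}=$ ``contains at least one edge'', $\Pr(\mathbf{S^*}\notin\mathcal{P})=e^{-\Theta(n^2)}$), so the additive term cannot be ``absorbed'' in a property-agnostic way; no amount of Bohman--Frieze--Lubetzky/DEM quasirandomness will make the failure event a sub-event of $\{\mathbf{S^*}\notin\mathcal{P}\}$. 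This is not a cosmetic issue for the present paper: the lemma is applied with $\Pr(\mathbf{S^*}\notin\mathcal{P})\le\exp(-\Omega(n^{2-6\delta}))$ and the conclusion is fed into Theorem \ref{Kwan2.4}, so even an additive coupling error of, say, $e^{-n}$ would destroy the application. Your specific coupling is also quantitatively off: with $p=\frac{1}{2n}$ the edge count of $\mathbf{S^*}$ concentrates near $e^{-3/2}\cdot\frac{1}{2n}\binom{n}{3}\approx 0.22\,m$ (the mutual-deletion step removes a constant fraction), not $m/2$, so a $\tfrac12$-thinning $\mathbf{T}$ of $\mathbf{S}$ has total-variation distance $1-o(1)$ from $\mathbf{S^*}$ and the claim ``$\mathbf{T}=\mathbf{S^*}$ with very high probability'' is false as stated.

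What the actual argument does is avoid small coupling errors altogether: one exhibits an \emph{exact} subset coupling valid on an event of probability $\Omega(1)$ that is \emph{independent of} $\mathbf{S}$, so the loss is a multiplicative constant. Concretely, fix a uniformly random ordering of all $\binom{n}{3}$ triples and, independently, $M\sim\mathrm{Bin}\left(\binom{n}{3},\frac{1}{2n}\right)$. Scanning the triples in this order and keeping each triple that shares two vertices with no previously kept triple reproduces the triangle removal process (conditionally on the history, the next kept triple is uniform over the currently available triples), so $\mathbf{S}$ may be taken to be the first $\frac16\binom{n}{2}$ kept triples; on the other hand the first $M$ triples in the ordering form $\mathbb{G}\left(n,\frac{1}{2n}\right)$, and deleting those that conflict with another of these $M$ gives $\mathbf{S^*}$. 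Any triple surviving this mutual deletion is automatically kept by the greedy scan, so on the event $\{M\le \frac16\binom{n}{2}\}$ we have $\mathbf{S^*}\subseteq\mathbf{S}$, whence $\{\mathbf{S}\notin\mathcal{P}\}\cap\{M\le \tfrac16\binom{n}{2}\}\subseteq\{\mathbf{S^*}\notin\mathcal{P}\}$ by monotonicity. Since this event depends only on $M$, which is independent of $\mathbf{S}$, and has probability bounded below by an absolute constant (the mean of $M$ is below $\frac16\binom{n}{2}$), one gets $\Pr(\mathbf{S}\notin\mathcal{P})\le \Pr(\mathbf{S^*}\notin\mathcal{P})/\Pr\left(M\le\tfrac16\binom{n}{2}\right)=O\left(\Pr(\mathbf{S^*}\notin\mathcal{P})\right)$, with no additive term and no analysis of the trajectory of the removal process. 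Reworking your write-up around such a conditioning-on-an-independent-constant-probability-event scheme, rather than trying to drive a total-variation error to zero, is what is needed.
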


The end result of these two theorems is that if we can show that a monotone increasing property holds with probability at least $1 - \mathrm{exp}\left(-n^{2-b}\right)$ in the distribution $\mathbb{G}^*(n, \frac{1}{2n})$ then it will also happen with probability tending to $1$ in a randomly chosen Steiner triple system. 
In particular, we will use the following lemma from \cite{Kwan} to show that $\alpha_3^*(\mathbb{G}^*(n, \frac{1}{2n}))\leq n^{1-\delta}$.  As pointed out in \cite{FK}, the following lemma can also be derived from \cite[Theorem 1.3]{W}.

\begin{lemma}[{\cite[Lemma 2.11]{Kwan}} and {\cite[Theorem 1.3]{W}}]\label{concentration}
Let $\omega=(\omega_1, \dots, \omega_N)$ be a sequence of independent, identically distributed random variables with $\mathrm{Pr}(\omega_i=1)=p$ and $\mathrm{Pr}(\omega_i=0)=1-p$.  Let $f:\{0,1\}^N\to \mathbb{R}$ satisfy the Lipschitz condition $|f(\omega)-f(\omega')|\leq K$ for all pairs $\omega, \omega'\in \{0,1\}^N$ differing in exactly one coordinate.  Then 
\[
\mathrm{Pr}(|f(\omega)-\mathbb{E}f(\omega)|>t)\leq \exp\left(-\frac{t^2}{4K^2Np+2Kt}\right).
\]
\end{lemma}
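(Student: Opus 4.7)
The plan is to obtain this as a Bernstein-type concentration inequality for martingales, applied to the Doob martingale of $f$. Set $\mathcal{F}_i = \sigma(\omega_1, \dots, \omega_i)$ and $M_i = \mathbb{E}[f(\omega) \mid \mathcal{F}_i]$, so that $M_0 = \mathbb{E}f(\omega)$, $M_N = f(\omega)$, and $D_i := M_i - M_{i-1}$ is a martingale difference sequence. The whole argument reduces to two pointwise bounds on the $D_i$, after which the conclusion is just an application of Freedman's (a.k.a.\ Bernstein's) inequality for martingales.

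First, I would show $|D_i| \leq K$ almost surely. Conditioned on $\mathcal{F}_{i-1}$, $D_i$ depends only on $\omega_i$; writing its two possible values as $D_i(0)$ and $D_i(1)$, the Lipschitz hypothesis (after integrating out $\omega_{i+1}, \dots, \omega_N$) gives $|D_i(1) - D_i(0)| \leq K$. Combined with $\mathbb{E}[D_i \mid \mathcal{F}_{i-1}] = 0$, this forces $D_i(0) = -pc$ and $D_i(1) = (1-p)c$ for some $|c| \leq K$, from which $|D_i| \leq K$.

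Second, and this is the crux of obtaining the correct $p$-dependence, I would bound the conditional variance
\[
\mathbb{E}\bigl[D_i^2 \mid \mathcal{F}_{i-1}\bigr] = p(1-p)c^2 \leq pK^2,
\]
so that the predictable quadratic variation satisfies $\sum_{i=1}^N \mathbb{E}[D_i^2 \mid \mathcal{F}_{i-1}] \leq NpK^2$ almost surely. Feeding the increment bound $K$ and this quadratic variation bound into Freedman's inequality, applied separately to $(M_i - M_0)$ and $(M_0 - M_i)$ and combined with a union bound, yields a tail estimate of the stated form; the constants $4$ and $2$ appearing in the denominator of the statement are deliberately loose enough to absorb the factor of $2$ from the union bound (using that the right-hand side of the target inequality is trivially $\geq 1$ in the regime of $t$ where the absorption would otherwise fail).

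The only real conceptual hurdle is recognizing that one must use a variance-sensitive martingale inequality rather than a direct Azuma--Hoeffding bound: the latter would replace $Np$ by $N$ in the denominator and be useless in the regime $p = 1/(2n)$ in which this lemma will be applied to the random partial system $\mathbb{G}^*(n, 1/(2n))$ in the proof of Theorem \ref{discrepancy}.
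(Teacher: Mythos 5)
The paper does not actually prove this lemma; it is cited verbatim from Kwan (Lemma~2.11) and Warnke (Theorem~1.3), so there is no internal argument to compare against. Judged on its own, your plan is the right one and the core of it is sound: the Doob martingale decomposition, the increment bound $|D_i|\leq K$, and the conditional variance computation $\mathbb{E}[D_i^2\mid\mathcal{F}_{i-1}]=p(1-p)c^2\leq pK^2$ with $|c|\leq K$ are exactly what is needed, and you correctly identify that the whole point is to use a variance-sensitive (Freedman/Bernstein) martingale inequality rather than Azuma--Hoeffding, which would give a useless denominator of order $N$ in the eventual application at $p=1/(2n)$.

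The step that does not close is the claim that the constants $4$ and $2$ absorb the factor of $2$ from the union bound, ``using that the right-hand side of the target inequality is trivially $\geq 1$'' for small $t$. The right-hand side is $\exp\bigl(-t^2/(4K^2Np+2Kt)\bigr)$, which is strictly less than $1$ for every $t>0$, so there is no regime in which it is trivially $\geq 1$ and the absorption cannot be rescued this way. In fact, without a leading factor of $2$ the inequality as printed is false for small $t$: take $N=1$, $K=1$, $p=1/2$, $f(\omega_1)=\omega_1$; then $|f-\mathbb{E}f|=1/2$ almost surely, so $\mathrm{Pr}(|f-\mathbb{E}f|>t)=1$ for all $0<t<1/2$, while $\exp\bigl(-t^2/(4p+2t)\bigr)<1$. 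This is almost certainly a transcription slip in the paper (the cited sources carry a prefactor $2$). Your argument, run to the end, correctly yields
\[
\mathrm{Pr}\bigl(|f(\omega)-\mathbb{E}f(\omega)|>t\bigr)\leq 2\exp\!\left(-\frac{t^2}{2K^2Np+2Kt/3}\right)\leq 2\exp\!\left(-\frac{t^2}{4K^2Np+2Kt}\right),
\]
and that is the strongest conclusion this route gives; it is also all that the application in the proof of Theorem~\ref{discrepancy} needs, since there $t^2/(4K^2Np+2Kt)=\Omega(n^{2-6\delta})\gg 1$ and the extra factor $2$ is harmless.
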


\begin{proof}[Proof of Theorem \ref{discrepancy}]
Let $A,B,C$ be fixed disjoint subsets of $[n]$ of size $n^{1-\delta}$ where $\delta$ is a small positive constant that will be chosen later. Let $\mathcal{P}_{A,B,C}$ be the property that exists a triple which intersects all of $A$, $B$, and $C$.  Note that $\mathcal{P}_{A,B,C}$ is monotone increasing.

First we estimate the probability that a fixed set of three vertices appears as an edge in the distribution $\mathbb{G}^*(n, \frac{1}{2n})$. Let $x,y,z$ be fixed vertices. Then 
\[
\mathrm{Pr}(xyz \mbox{ forms an edge}) = \frac{1}{2n} \left(1-\frac{1}{2n}\right)^{3(n-3)} \sim \frac{1}{2n}e^{-3/2}.
\]
Therefore, the expected number of edges induced by $A,B,C$ in $\mathbb{G}^*(n, \frac{1}{2n})$, which we denote by $e^*(A,B,C)$ is
\[
\mathbb{E}(e^*(A,B,C)) = |A||B||C|\mathrm{Pr}(xyz \mbox{ forms an edge}) \sim \frac{e^{-3/2}}{2}n^{2-3\delta}.
\]

Let the triples on $[n]$ be ordered arbitrarily and let $Z_1 ,\cdots, Z_{\binom{n}{3}}$ indicator random variables where $Z_i = 1$ indicates that the $i$'th triple appears in $\mathbb{G}(n, p)$.  Let $f_{A,B,C}$ be the function where $f_{A,B,C}\left(Z_1 ,\cdots, Z_{\binom{n}{3}}\right)$ equals the number of edges in the resulting output of $\mathbb{G}^*(n,p)$ each with one endpoint in each of $A$, $B$, and $C$. Note that $f_{A,B,C}$ is $3$-Lipschitz, since changing one triple in $\mathbb{G}(n,p)$ may add at most one edge or remove at most three edges from the resulting output of $\mathbb{G}^*(n,p)$. 
Thus $f_{A,B,C}$ satisfies the hypotheses of Lemma \ref{concentration} (with $K=3$). 

Now using Lemma \ref{concentration} with $t=\mathbb{E}(e^*(A,B,C))\sim \frac{e^{-3/2}}{2}n^{2-3\delta}$ and the fact that $$\mathrm{Pr}\left(\mathbb{G}^*\left(n, \frac{1}{2n}\right)\not\in \mathcal{P}_{A,B,C}\right)=\mathrm{Pr}\left(e^*(A,B,C)=0\right)=\mathrm{Pr}\left(f_{A,B,C}\left(Z_1 ,\cdots, Z_{\binom{n}{3}}\right)\leq  0\right),$$ we have 
\[
\mathrm{Pr}\left(\mathbb{G}^*\left(n, \frac{1}{2n}\right)\not\in \mathcal{P}_{A,B,C}\right)\leq \exp\left(\frac{-t^2}{3n^2+6t}\right)\leq \exp\left(-\Omega\left(n^{2-6\delta}\right)\right).
\]

Now let $\mathcal{P}$ be the property that for all disjoint sets $A$, $B$, and $C$ each of size $n^{1-\delta}$ there is at least one edge which touches all three sets. Since there are at most $2^n$ choices for each of these sets, by the union bound we have 
\[
\mathrm{Pr}\left(\mathbb{G}^*\left(n,\frac{1}{2n}\right) \not\in \mathcal{P}\right) \leq 2^{3n} \cdot \exp\left(-\Omega\left(n^{2-6\delta}\right)\right) = \exp\left(-\Omega\left(n^{2-6\delta}\right)\right).
\]

Therefore, letting $b$ be the constant given by Theorem \ref{Kwan2.4} and $\delta$ any positive constant less than $b/6$, we have by Theorems \ref{Kwan2.4} and \ref{Kwan2.10} that if $\mathbf{S} \in \mathcal{O}_n$ is a randomly chosen ordered Steiner triple system, then the first half of its edges induce at least $1$ edge on any $3$ disjoint sets $A,B,C$ of size $n^{1-\delta}$ almost surely, and so $\alpha_3^*(\mathbf{S}) \leq n^{1-\delta}$ almost surely.
\end{proof}

We close this section by raising the following problem, part (i) would strongly imply the existence result of Linial and Luria \cite{LL} and part (ii) would strongly imply the existence result of Grable, Phelps, and R\"odl \cite{GPR} (both of which are discussed in Section \ref{sec:gen}).  

\begin{problem}\label{prob_aas}
Is it true that for $n\equiv 1,3\bmod 6$, if $S\in \mathcal{S}_n$ is chosen uniformly at random, then a.a.s. 
\begin{enumerate}
\item $\alpha_3^*(S)=O(\sqrt{n\log n})$ and
\item $\alpha(S)=O(\sqrt{n\log n})$?
\end{enumerate}
\end{problem}

We note that a positive answer to part (i) would imply a positive answer to part (ii) because, as previously observed, we have $\floor{\frac{\alpha(S)}{3}}\leq \alpha^*_3(S)$.

\section{A lower bound for all Steiner triple systems}\label{sec:lower}

We begin with an upper bound on the 3-partite hole number of Steiner triple systems (which slightly improves the trivial upper bound).  This was previously observed by Gy\'arf\'as in the introduction of \cite{G2}, but in order to keep the paper self-contained and to make the calculation explicit, we provide a proof.

\begin{proposition}\label{alpha3_upper}
For all $S\in \mathcal{S}_n$, $\alpha^*_3(S)\leq \frac{n}{3}-1$.
\end{proposition}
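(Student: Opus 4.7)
The plan is to prove the bound by pair-counting. Let $a=\alpha_3^*(S)$ and fix disjoint witnessing sets $X_1,X_2,X_3\subseteq V(S)$, each of size $a$, with no triple of $S$ intersecting all three. Write $Y=V(S)\setminus(X_1\cup X_2\cup X_3)$ and $y=|Y|=n-3a$; the goal is to show $y\geq 3$, which is equivalent to $a\leq n/3-1$.

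The first step is to classify every triple of $S$ by its intersection pattern with the four parts $X_1,X_2,X_3,Y$. Since no triple meets all three $X_i$, the only profiles that can occur are: entirely inside a single $X_i$ (count $t_i$); two in $X_i$ and one in $X_j$ with $i\neq j$ (count $b_{ij}$); two in $X_i$ and one in $Y$ (count $d_i$); one in $X_i$ and two in $Y$ (count $e_i$); one in each of $X_i, X_j, Y$ with $i\neq j$ (count $c_{ij}=c_{ji}$); entirely inside $Y$. Write $T$, $B$, $D$, $E$, $C$ for the respective total sums over $i$ or over unordered pairs $\{i,j\}$. Using the Steiner property that every pair lies in exactly one triple, and counting separately the intra-$X_i$ pairs, the $X_i$--$X_j$ crossing pairs, and the $X_i$--$Y$ crossing pairs, I would obtain the three identities
\[
3\binom{a}{2}=3T+B+D,\qquad 3a^2=2B+C,\qquad 3ay=2C+2D+2E.
\]
Eliminating $B$ between the first two yields $C=3a+6T+2D\geq 3a$, while the third identity together with $D,E\geq 0$ gives $C\leq 3ay/2$. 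Hence $3a\leq 3ay/2$, and assuming $a\geq 1$ (otherwise the claim is trivial for $n\geq 3$), this forces $y\geq 2$.

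To finish, I would invoke the divisibility condition for the existence of a Steiner triple system: since $n\equiv 1$ or $3\pmod 6$, we have $y=n-3a\equiv n\pmod 3\in\{0,1\}$, so $y$ cannot equal $2$. Therefore $y\geq 3$, i.e., $a\leq (n-3)/3=n/3-1$, as claimed. The main obstacle is the combinatorial bookkeeping in classifying triples and setting up the three pair-counting identities correctly; once those are in hand the algebra is immediate and the divisibility observation closes the argument.
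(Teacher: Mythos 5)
Your proof is correct, but it goes by a noticeably different route than the paper's. The paper argues by contradiction: if $\alpha^*_3(S)\geq\frac{n-2}{3}$, split on $n\bmod 6$ (so the leftover set $Y$ has size $0$ or $1$), observe that every triple not meeting $Y$ has at least two vertices in a common part of the hole (since no triple meets all three parts), and count: each such triple uses up a pair inside a single part, so $e(S)\leq\sum_i\binom{|V_i|}{2}$ plus the $\frac{n-1}{2}$ triples through the leftover vertex, which is strictly less than $\frac{n(n-1)}{6}$. Your argument instead classifies every triple by its full intersection profile with $X_1,X_2,X_3,Y$, extracts three pair-counting identities, eliminates to get $C=3a+6T+2D\geq 3a$ forcing $y\geq 2$ whenever $a\geq 1$, and then invokes the $n\equiv 1,3\pmod 6$ divisibility to exclude $y=2$ (since $y\equiv n\pmod 3\in\{0,1\}$). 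Both arguments are pair counts under the Steiner property; the paper's is shorter because it only needs the coarse fact that every triple inside $X_1\cup X_2\cup X_3$ has a pair in one part, while yours tracks all six nontrivial profile types. What your version buys is that the generic inequality $y\geq 2$ (equivalently $\alpha^*_3(S)\leq\frac{n-2}{3}$) falls out before any appeal to the arithmetic of $n$, with the divisibility used only for a one-step bump — so the two pieces of input (pair count vs.\ mod-$3$ structure) are cleanly separated, whereas the paper mixes them by splitting into two $n\bmod 6$ cases from the start. One small remark: as you note, the case $a=0$ must be handled separately (the inequality $3a\leq 3ay/2$ is vacuous there), and the statement is only meaningful for $n\geq 3$, which the paper takes for granted as well.
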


\begin{proof}
Let $S \in \mathcal{S}_n$ and suppose for contradiction that $\alpha_3^*(S) \geq \frac{n-2}{3}$.  Let $V_1, V_2, V_3\subseteq [n]$ be disjoint sets with $|V_1|, |V_2|, |V_3| \geq \frac{n-2}{3}$ such that no triple in $S$ touches all three of $V_1, V_2, V_3$.  

If $n\equiv 3\bmod 6$, then $|V_1|, |V_2|, |V_3| \geq \frac{n-2}{3}$ implies $|V_1|=|V_2|=|V_3|=\frac{n}{3}$.  Since each triple touches at most two of the parts, we have 
\begin{equation}\label{3part}
\frac{n(n-1)}{6}=e(S)\leq \binom{|V_1|}{2} + \binom{|V_2|}{2} + \binom{|V_3|}{2}=3\binom{n/3}{2}=\frac{n(n-3)}{6},
\end{equation}
a contradiction.

So suppose $n\equiv 1\bmod 6$, in which case $|V_1|, |V_2|, |V_3| \geq \frac{n-2}{3}$ implies $|V_1|,|V_2|,|V_3|\geq \frac{n-1}{3}$.  So let $\{\{v\}, V_1, V_2, V_3\}$ be a partition of $[n]$ with $|V_1|=|V_2|=|V_3|=\frac{n-1}{3}$ such that no triple touches all three of $V_1, V_2, V_3$.  In this case, we have 
\begin{align*}
\frac{n(n-1)}{6}=e(S)&\leq \binom{|V_1|}{2} + \binom{|V_2|}{2} + \binom{|V_3|}{2}+\frac{n-1}{2}\\
&= 3\binom{(n-1)/3}{2} +\frac{n-1}{2} =\frac{(n-1)(n-4)}{6}+\frac{n-1}{2}=\frac{(n-1)^2}{6},
\end{align*}
again a contradiction.
\end{proof}

Now we use Proposition \ref{alpha3_upper} and Theorem \ref{n-a*_lower} to prove a general lower bound on the size of a largest monochromatic component in every 3-coloring of every $S\in \mathcal{S}_n$.  Again, this was previously proved by Gy\'arf\'as \cite[Theorem 1]{G2}, but since our proof is different, we include it here. 

\begin{theorem}\label{mc 2/3 lower bound}
For all $S\in \mathcal{S}_n$, $\mc_3(S)\geq \frac{2n}{3}+1$.
\end{theorem}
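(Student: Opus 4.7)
The plan is to invoke Theorem~\ref{n-a*_lower} and handle its two cases separately. In case~\ref{itm:T1}, Proposition~\ref{alpha3_upper} immediately yields $\mc_3(S)\ge n-\alpha_3^*(S)\ge n-(n/3-1)=2n/3+1$. So I would focus on case~\ref{itm:T2}, where there is a partition $\{V_1,V_2,V_3,V_4\}$ of $V(S)$ with $n_i:=|V_i|$ and $n_1\ge n_2\ge n_3\ge n_4\ge 1$, no triple of $S$ touching three of the parts, and the specific coloring described there. Under that coloring the only blue crossings lie within $V_1\cup V_2$ or within $V_3\cup V_4$, so the blue subhypergraph splits into two components and the largest monochromatic component has exactly $n_1+n_2$ vertices. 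Thus it suffices to show $n_1+n_2\ge 2n/3+1$, equivalently $n_3+n_4\le\lfloor(n-3)/3\rfloor$.

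For this I would repeat the counting step in the proof of Proposition~\ref{alpha3_upper}, but applied to all four parts at once. Since no triple touches three of $V_1,\dots,V_4$, pigeonhole guarantees that every triple contains a pair of vertices lying within some single $V_i$; because every pair of $V(S)$ lies in exactly one triple,
\[
\frac{n(n-1)}{6}=e(S)\le\sum_{i=1}^{4}\binom{n_i}{2},\qquad\text{which rearranges to}\qquad\sum_{i=1}^{4}n_i^2\ge\frac{n^2+2n}{3}.
\]
Suppose for contradiction that $c:=n_3+n_4\ge\lfloor n/3\rfloor$. A short discrete optimization shows that, subject to the ordering constraints and $n_3+n_4=c$, the maximum of $\sum n_i^2$ is realized at the most balanced integer configuration compatible with the ordering: $(3k+1,k,k,k)$ with $\sum n_i^2=12k^2+6k+1$ when $n=6k+1$ and $c=2k$, and $(3k+1,k+1,k+1,k)$ with $\sum n_i^2=12k^2+10k+3$ when $n=6k+3$ and $c=2k+1$. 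In both residues the maximum falls short of $(n^2+2n)/3$ (by $2k$ and $6k+2$ respectively), contradicting the counting bound. Since pushing $c$ still larger only makes the partition more balanced and shrinks $\sum n_i^2$ further, we conclude $c\le\lfloor n/3\rfloor-1=\lfloor(n-3)/3\rfloor$.

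The main obstacle will be the optimization step: one must verify that the listed near-balanced tuples actually maximize $\sum n_i^2$ over all valid ordered configurations at each relevant value of $c$, and the two residue classes $n\equiv 1,3\pmod{6}$ must be carried through in parallel so that the bound $2n/3+1$ is obtained with the correct integer rounding. Each of these reduces to a finite case check using convexity of $x\mapsto x^2$ and the ordering $n_1\ge n_2\ge n_3\ge n_4$, but the bookkeeping is where the care is required.
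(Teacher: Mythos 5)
Your proof follows the same top-level strategy as the paper's: apply Theorem~\ref{n-a*_lower}, dispatch case~\ref{itm:T1} via Proposition~\ref{alpha3_upper}, and in case~\ref{itm:T2} combine the count $\frac{n(n-1)}{6}\le\sum_i\binom{|V_i|}{2}$ with the ordering constraints to force $|V_1|+|V_2|$ to be large. The divergence is in how the resulting optimization is handled. The paper relaxes to a two-variable program: from $|V_2|\ge|V_3|\ge|V_4|$ it derives the constraints $x_1+3x_2\ge n$ and $\binom{x_1}{2}+3\binom{x_2}{2}\ge\frac{n(n-1)}{6}$, then solves a single quadratic in $x_2$ to get $x_1+x_2\ge\frac{n}{2}+\frac{n}{6}\sqrt{1+8/n}>\frac{2n+1}{3}$, with no residue casework and no need to exhibit an extremal tuple. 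You instead fix $c=n_3+n_4$, posit an explicit maximizer of $\sum n_i^2$, and verify the two residue classes separately.

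Your route can be made to work, and your arithmetic for the claimed maximizers checks out (for $n=6k+1$, $c=2k$ the tuple $(3k+1,k,k,k)$ gives $12k^2+6k+1$ against a required $12k^2+8k+1$; for $n=6k+3$, $c=2k+1$ the tuple $(3k+1,k+1,k+1,k)$ gives $12k^2+10k+3$ against $12k^2+16k+5$). But the two facts you defer are genuine gaps, and the first is more delicate than it may look. Since $x\mapsto x^2$ is convex, over a bare sum constraint $\sum n_i^2$ is maximized at the most \emph{un}balanced feasible point; the reason your balanced tuples win here is that the chain $n_1\ge n_2\ge n_3\ge n_4$ couples the pairs --- unbalancing $(n_3,n_4)$ raises $n_3$, which forces $n_2\ge n_3$ up and hence balances $(n_1,n_2)$ --- and for $c=\lfloor n/3\rfloor$ this trade-off happens to go your way. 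A correct argument sets $n_2=n_3=t$ (the extremal position for the top pair), observes the objective is convex in $t$, and compares the two integer endpoints $t=\lceil c/2\rceil$ and $t=c-1$; this endpoint comparison must actually be written. The second deferred claim, that increasing $c$ past $\lfloor n/3\rfloor$ only shrinks the maximum, is also not a one-liner: the maximizing tuple for $c=\lfloor n/3\rfloor+1$ is \emph{not} the balanced one (for $n=6k+1$, $c=2k+1$ it is $(2k,2k,2k,1)$), so ``more balanced'' is not literally what happens even though the numerical monotonicity holds. A cleaner way to reduce to a single value of $c$ is to argue that at a maximizer of the convex function $\sum n_i^2$ over the polytope $\{n_1\ge n_2\ge n_3\ge n_4\ge 1,\ \sum n_i=n,\ n_3+n_4\ge\lfloor n/3\rfloor\}$, one can always transfer weight from a smaller part to $n_1$ until the last constraint is tight. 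In short: the approach is sound and you have correctly located where the care is needed, but those two optimization steps are precisely what a complete proof must supply; the paper's relaxation is engineered to avoid them entirely.
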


\begin{proof}
Let $S \in \mathcal{S}_n$. Note that since $n$ is congruent to either $1$ or $3$ mod $6$ and $\mc_3(S)$ is an integer, in order to show that $\mc_3(S) \geq \frac{2n}{3}+1$ it suffices to show that $\mc_3(S) > \frac{2n+1}{3}$.  

By Proposition \ref{alpha3_upper} we have $\alpha^*_3(S)\leq \frac{n}{3}-1$.  Now we apply Theorem \ref{n-a*_lower} and either get $\mc_3(S)\geq n-\alpha^*_3(S)\geq \frac{2n}{3}+1$, or 3-coloring of $S$ and a partition $\{V_1, V_2, V_3, V_4\}$ of $V(S)$ satisfying \ref{itm:T2}.  If we are in this situation, then $V_1\cup V_2$ induces a monochromatic component and thus $\mc_3(S) \geq |V_1| + |V_2|$.  Since no edge intersects three of the sets $V_1,V_2,V_3,V_4$ we have 

\[
\frac{n(n-1)}{6}=e(S)\leq \sum_{i=1}^4\binom{|V_i|}{2}
\]

This implies that $\mc_3(S) \geq z_1$ where $z_1$ is the solution to the following integer program.

\begin{align*}
& \mbox{minimize} &z_1 = f(x_1, x_2, x_3, x_4) = x_1+x_2\\
&\mbox{subject to} &x_1+x_2+x_3+x_4 = n\\
& & x_1 \geq x_2 \geq x_3 \geq x_4\\
& & \frac{n(n-1)}{6} \leq \sum_{i=1}^4 \binom{x_i}{2}.
\end{align*}

Instead we solve a slightly more relaxed integer program:

\begin{align*}
& \mbox{minimize} &z_2 = g(x_1, x_2) = x_1+x_2\\
&\mbox{subject to} &x_1+3x_2 \geq  n\\
& & x_1 \geq x_2 \\
& & \frac{n(n-1)}{6} \leq \binom{x_1}{2} + 3\binom{x_2}{2}.
\end{align*}

Note that $z_2 \leq z_1$ since for any $(x_1,x_2, x_3, x_4)$ which is feasible for the first problem we have $(x_1, x_2)$ is feasible for the second problem, and $f(x_1,x_2, x_3, x_4) = g(x_1, x_2)$. Therefore, $\mc_3(S) \geq z_2$ and it suffices to show that $z_2 > \frac{2n+1}{3}$ for $n\geq 3$.

To show this, assume that $(x_1, x_2)$ is an optimal solution for the second problem. Then we have $x_1 \geq n-3x_2$ and so 
\[
\frac{n(n-1)}{6} \leq \binom{n-3x_2}{2} + 3\binom{x_2}{2}.
\]
This implies $18x_2^2 - 9nx_2 + n(n-1) \geq 0$, which implies that either 
\[
x_2 \geq \frac{9n + \sqrt{9n^2 + 72n}}{36}.
\]
or 
\[
x_2 \leq  \frac{9n - \sqrt{9n^2 + 72n}}{36}.
\]
If $x_2 \geq \frac{9n + \sqrt{9n^2 + 72n}}{36}$, then $z_2=x_1+x_2\geq 2\left(\frac{9n + \sqrt{9n^2 + 72n}}{36}\right)= \frac{n}{2} + \frac{n}{6}\sqrt{1 + \frac{8}{n}}$.  If $x_2 \leq  \frac{9n - \sqrt{9n^2 + 72n}}{36}$, then $z_2 = x_1 + x_2 \geq n-2x_2 \geq \frac{n}{2} + \frac{n}{6}\sqrt{1+ \frac{8}{n}}$.  Since $\frac{n}{2} + \frac{n}{6}\sqrt{1+ \frac{8}{n}}>\frac{2n+1}{3}$ for all $n \geq 2$, we are done in either case.


\end{proof}

\section{An upper bound for infinitely many Steiner triple systems}\label{sec:upper}

In this section we prove the following result which shows that Theorem \ref{mc 2/3 lower bound} is asymptotically best possible.  

\begin{theorem}\label{mc 2/3 upper bound}
There exists an infinite family of Steiner triple systems $S_{n_k}$ on $n_k$ vertices with 
\[
\mc_3(S_{n_k}) \leq (2/3 + o(1))n_k.
\]
\end{theorem}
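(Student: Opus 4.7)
The plan is to combine Proposition \ref{n-a*_upper}, which gives $\mc_3(S) \leq n - \alpha_3^*(S)$, with an explicit construction of Steiner triple systems with large 3-partite hole. In view of the upper bound $\alpha_3^*(S) \leq n/3 - 1$ from Proposition \ref{alpha3_upper}, it suffices to exhibit an infinite sequence of STSs $S_{n_k} \in \mathcal{S}_{n_k}$ achieving (or nearly achieving) this bound on $\alpha_3^*$.

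I would focus on the arithmetic progression $n_k = 18k+3$, writing $n = 3m$ with $m \equiv 1 \pmod 6$. On a vertex set $V = V_0 \sqcup V_1 \sqcup V_2 \sqcup V_3$ with $|V_0| = 3$ and $|V_1|=|V_2|=|V_3|=m-1$, the goal is to build an STS on $V$ whose every triple lies in one of the ``non-transversal'' families: the distinguished triple $V_0$ itself; triples entirely inside some $V_i$; ``two-plus-one'' triples with two vertices in one $V_i$ and the third vertex in $V_0$ or in a different $V_j$; and ``pivot'' triples $\{v, a, b\}$ with $v \in V_0$, $a \in V_i$, $b \in V_j$ for distinct $i, j \in \{1,2,3\}$. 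Since none of these triples meets all three of $V_1, V_2, V_3$, those sets form a 3-partite hole of size $m-1 = (n-3)/3$ in the resulting STS.

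To execute the construction, I would write down the pair-coverage equations for the four triple types above and verify that the resulting linear system admits non-negative integer solutions -- this is where the divisibility condition $m \equiv 1 \pmod 6$ is used. The design is then realized using a sub-STS on some $V_i \cup \{v\}$ with $v \in V_0$, a Latin-square / group-divisible-design family to produce the bipartite cross-triples coherently, and a careful coordination of the three pivot vertices in $V_0$. Once the construction is in place, Proposition \ref{n-a*_upper} yields
\[
\mc_3(S_{n_k}) \leq n_k - (m-1) = 2n_k/3 + 1 = (2/3+o(1))n_k.
\]
The main obstacle is the combinatorial bookkeeping in assembling the cross-triples across the three bipartite pairs $\{V_i, V_j\}$ simultaneously while maintaining exact pair coverage; this is standard design theory but is what pins down the residue restriction $n \equiv 3 \pmod{18}$.
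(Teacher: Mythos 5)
Your overall strategy---reduce via Proposition~\ref{n-a*_upper} to exhibiting an infinite family of Steiner triple systems with $\alpha_3^*(S) = (1/3 - o(1))|S|$---is the correct reduction, and it is the same one the paper uses. The problem is that your construction stops at a sketch. You describe the desired triple types (the distinguished block $V_0$, triples inside a single $V_i$, ``two-plus-one'' triples, and ``pivot'' triples through $V_0$), assert that a pair-coverage count will ``admit non-negative integer solutions,'' and then gesture at ``a Latin-square / group-divisible-design family'' and ``careful coordination of the three pivot vertices in $V_0$.'' None of this is actually carried out. As written it is a plan, not a proof, and the ``standard design theory'' you defer to is exactly where all the content of the theorem lies: realizing the cross-triples coherently across three bipartite pairs while respecting exact pair coverage is the hard part, and it is not obvious that the particular scheme you describe (one sub-STS on $V_i \cup \{v\}$ plus a GDD-style family) can be made to close up. In effect you are proposing to rederive Gy\'arf\'as's Theorem~4 in \cite{G2}, which does carry out a construction of this kind for $n \equiv 3 \bmod 18$ and gives the stronger exact bound $\mc_3(S) \le 2n/3 + 1$; but that work has to actually be done.

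The paper sidesteps the explicit design construction entirely. It uses the recursive bicoloring machinery of Colbourn, Dinitz, and Rosa (Theorem~\ref{CDR recursive construction}): starting from the unique STS on $9$ points, which is $(1,4,4)$-bicolorable, one first obtains a $(24,24,33)$-bicolorable system and then iterates with $M_k = M_{k-1}^2 + 2M_{k-1}N_{k-1}$ and $N_k = 2M_{k-1}^2 + N_{k-1}^2$. A short monotone-bounded argument shows the ratio $r_k = M_k/N_k$ increases to a fixed point of $r \mapsto r(r+2)/(2r^2+1)$, which forces $r_k \to 1$, so the smallest color class has size $(1/3 - o(1))n$. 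Since in a bicoloring no triple meets all three color classes, those classes form a $3$-partite hole, and Proposition~\ref{n-a*_upper} finishes the proof. To turn your proposal into a proof, you would either need to complete your explicit construction (essentially reproving Gy\'arf\'as's Theorem~4) or substitute an existing family such as these bicolorable systems.
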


As mentioned in the introduction, a stronger (non-asymptotic) statement was proved by Gy\'arf\'as \cite[Theorem 4]{G2}, which shows that Theorem \ref{mc 2/3 lower bound} is actually best possible for certain values of $n$.  Regardless, we provide a proof of Theorem \ref{mc 2/3 upper bound} as it turns out that Theorem \ref{mc 2/3 upper bound} was alluded to in the abstract and introduction of \cite{G2}; however, as far as we know, a proof of this fact doesn't appear in the literature.

We begin with the following definition from \cite{CDR}. Given a Steiner triple system $(V, \mathcal{B})$, a $3$-coloring of the vertex set $\phi:V\to \{1,2,3\}$ is called a {\em bicoloring} if for all $B\in \mathcal{B}$ we have 
\[
\left| \bigcup_{v\in B} \phi(v)\right|  = 2;
\]
that is, every triple contains exactly $2$ colors. We say that a Steiner triple system is $(a,b,c)$-bicolorable if there is a bicoloring $\phi$ with 
\[
|\phi^{-1}(1)| = a \quad |\phi^{-1}(2)| = b \quad |\phi^{-1}(3)| = c.
\]

In \cite{CDR} the following recursive construction of bicolorings is given.

\begin{theorem}[\cite{CDR} Theorem 2.4]\label{CDR recursive construction}
If there exists an $(a,b,c)$-bicolorable Steiner triple system with $c = \max\{a,b,c\}$ and $c\leq a+b$, and if there exists an $(x,y,z)$-bicolorable Steiner triple system, then there exists an $(ay+bz+cx, az+bx+cy, ax+by+cz)$-bicolorable Steiner triple system.
\end{theorem}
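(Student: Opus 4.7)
The plan is to construct the new system on the Cartesian product $V := V_1 \times V_2$. Identify $\{1,2,3\}$ with $\mathbb{Z}_3$ (relabelling $0\mapsto 3$) and set $\Phi(v,w) := \phi_2(w) - \phi_1(v) \pmod 3$. Counting the nine blocks $\phi_1^{-1}(i) \times \phi_2^{-1}(j)$ shows that the color classes $\Phi^{-1}(1), \Phi^{-1}(2), \Phi^{-1}(3)$ have sizes $ay+bz+cx,\ az+bx+cy,\ ax+by+cz$ respectively, matching the theorem statement.

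The triples come in three families. \emph{Horizontal} triples $\{v\} \times T_2$ (for $v \in V_1$, $T_2 \in \mathcal{B}_2$) are bicolored because $\phi_2$ is a bicoloring of $S_2$; \emph{vertical} triples $T_1 \times \{w\}$ (for $T_1 \in \mathcal{B}_1$, $w \in V_2$) are bicolored because $\phi_1$ is, and together the two families cover all pairs sharing a coordinate. For the remaining ``mixed'' pairs (both coordinates differ) I use \emph{transversal} triples $\{(v_1,u_1),(v_2,u_2),(v_3,u_3)\}$ with all first and all second coordinates distinct. The combinatorial identity behind the construction is that for any $T_1 = \{v_1,v_2,v_3\} \in \mathcal{B}_1$ and any three-element subset $T_2 \subseteq V_2$, the six bijections $T_1 \to T_2$ yield six transversal triples whose union covers precisely the eighteen mixed pairs of $T_1 \times T_2$, each exactly once. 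Hence to complete the STS it suffices, for each $T_1 \in \mathcal{B}_1$, to pick an auxiliary STS $\mathcal{B}_{T_1}$ on $V_2$ and include, for every $T_2 \in \mathcal{B}_{T_1}$, all six bijection triples across $T_1 \times T_2$.

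The bicoloring check reduces to a short mod-$3$ computation. On any transversal triple of $T_1 \times T_2$ the three $\Phi$-values sum (mod $3$) to $\sum_{w \in T_2} \phi_2(w) - \sum_{v \in T_1} \phi_1(v)$, independently of which bijection is chosen. Since three elements of $\mathbb{Z}_3$ have exactly two distinct values iff their sum is nonzero, all six bijection triples on $T_1 \times T_2$ are bicolored iff
\[
\sum_{w \in T_2} \phi_2(w) \;\not\equiv\; \sum_{v \in T_1} \phi_1(v) \pmod 3.
\]
If $T_1$ has $\phi_1$-color pattern $(p,p,q)$ with $p \ne q$, the right-hand side equals $q-p \in \{1,2\}$, so the task reduces to: for each $T_1$, produce an STS $\mathcal{B}_{T_1}$ on $V_2$ every triple of which has $\phi_2$-sum not equal to $q-p$ modulo $3$.

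The main obstacle is this final existence step. Monochromatic and rainbow triples on $V_2$ have $\phi_2$-sum $\equiv 0 \pmod 3$ and are automatically safe, so only bicolored triples carrying a specific forbidden residue need be excluded. One therefore needs, for each $r \in \{1,2\}$, an STS on $V_2$ whose bicolored triples all have $\phi_2$-sum different from $r$, typically obtained by starting from $\mathcal{B}_2$ and swapping the forbidden bicolored triples out in favor of monochromatic or rainbow ones. The hypothesis $c = \max\{a,b,c\}$ with $c \le a+b$ enters precisely here as a balance condition on the color-class sizes of $S_1$ governing how many triples of $\mathcal{B}_1$ demand each forbidden residue; making the swapping construction on $V_2$ explicit---and verifying that the auxiliary STSs exist for every achievable value of $q-p$ whenever $(a,b,c)$ and $(x,y,z)$ come from bicolorings and the hypothesis holds---is the most delicate part of the argument.
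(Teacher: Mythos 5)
First, note that the paper does not prove this statement at all---it is quoted verbatim from Colbourn--Dinitz--Rosa \cite{CDR} and used as a black box---so there is no in-paper proof to compare against; what follows is an assessment of your argument on its own terms. The skeleton you set up is sound and is in the spirit of the standard product construction one would expect: the coloring $\Phi(v,w)=\phi_2(w)-\phi_1(v)\pmod 3$ does give classes of sizes $ay+bz+cx$, $az+bx+cy$, $ax+by+cz$; the horizontal/vertical/transversal decomposition does yield a Steiner triple system once the auxiliary systems $\mathcal{B}_{T_1}$ are supplied; and the observation that a transversal triple over $T_1\times T_2$ is bicolored iff $\sum_{w\in T_2}\phi_2(w)\not\equiv\sum_{v\in T_1}\phi_1(v)\pmod 3$ is correct.

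The gap is the final existence step, and it is not a technicality---it is the entire design-theoretic content of the theorem, and your reduction places it on the wrong factor. As you have arranged things, for each residue $r\in\{1,2\}$ arising from a block of $\mathcal{B}_1$ you need an STS on $V_2$ none of whose blocks has $\phi_2$-sum $r$; whether such a system exists is governed by $(x,y,z)$, and the theorem imposes \emph{no} condition on $(x,y,z)$ (it is an arbitrary bicolorable system), so the stated hypotheses cannot certify this step. Your suggestion that $c=\max\{a,b,c\}\le a+b$ enters as a balance condition ``governing how many triples of $\mathcal{B}_1$ demand each forbidden residue'' does not help: the number of blocks of $\mathcal{B}_1$ demanding a residue is irrelevant, since a single auxiliary system per residue can be reused for all of them; what matters is whether even one exists, and that is a constraint on the second factor in your setup. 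To make the hypothesis usable you must mirror the construction---organize the transversal triples over blocks of $\mathcal{B}_2$, with the auxiliary structure living on $V_1$, where the condition on $(a,b,c)$ can plausibly guarantee existence (e.g.\ of a quasigroup $*$ on $V_1$ with $\phi_1(u*v)\not\equiv r-\phi_1(u)-\phi_1(v)$ for all $u,v$, which is what the classical per-block Latin-square product construction would require, and for which $c\le a+b$ with $c$ maximal is the natural Hall/Ryser-type condition). Even after that repair, the existence of the auxiliary object still has to be proved; the proposed shortcut of ``starting from $\mathcal{B}_2$ and swapping the forbidden bicolored triples out in favor of monochromatic or rainbow ones'' is not a legitimate operation---in a bicolored system \emph{every} block is bicolored, and one cannot locally exchange blocks of an STS while preserving the pair-covering property. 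So the proposal correctly reduces the theorem to an unproved (and, as stated, unprovable-from-the-hypotheses) existence claim, and is therefore incomplete.
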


We use bicolorable Steiner triple systems to produce edge colorings with small monochromatic components.

\begin{lemma}\label{bicoloring to edge coloring}
If $S \in \mathcal{S}_n$ is $(a,b,c)$-bicolorable with $a\leq b\leq c$, then $\mc_3(S) \leq b+c$.
\end{lemma}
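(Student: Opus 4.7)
The plan is to use the bicoloring $\phi$ to directly define a 3-edge-coloring. Since $\phi$ is a bicoloring, for each triple $B \in \mathcal{B}$ the set $\phi(B) = \{\phi(v) : v \in B\}$ has size exactly $2$, so there is a unique color in $\{1,2,3\} \setminus \phi(B)$. I would color each edge $B$ by this missing color. In other words, define $\psi : \mathcal{B} \to \{1,2,3\}$ by letting $\psi(B)$ be the unique element of $\{1,2,3\} \setminus \phi(B)$.

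The key observation is then that an edge of color $i$ under $\psi$ contains no vertex of $\phi$-color $i$; that is, every edge colored $i$ lies entirely inside $V \setminus \phi^{-1}(i)$. Consequently, every connected component of the color-$i$ subhypergraph is contained in $V \setminus \phi^{-1}(i)$, so its size is at most $n - |\phi^{-1}(i)|$.

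Plugging in the sizes from the $(a,b,c)$-bicoloring, the largest color-$1$ component has at most $n - a = b + c$ vertices, the largest color-$2$ component has at most $n - b = a + c$ vertices, and the largest color-$3$ component has at most $n - c = a + b$ vertices. Using $a \leq b \leq c$, the maximum of these three bounds is $b + c$, and therefore $\mc_3(S) \leq b + c$. There is no real obstacle here; the only thing to verify carefully is that the coloring $\psi$ is well defined, which is immediate from the bicoloring property $|\phi(B)| = 2$ for every triple $B$.
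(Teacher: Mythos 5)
Your proof is correct and is essentially the same argument as the paper's: the paper notes that the color classes of the bicoloring witness $\alpha^*_3(S)\geq a$ and then invokes Proposition \ref{n-a*_upper}, whose own proof constructs exactly the edge-coloring you describe (color each triple by the part it avoids) and observes each monochromatic component misses a part. You have simply inlined that proposition's proof rather than citing it.
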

\begin{proof}
Since $S$ is $(a,b,c)$-bicolorable, we have that $\alpha^*_3(S)\geq a$ and thus by Theorem \ref{n-a*_upper}, we have $\mc_3(S)\leq n-a=b+c$.
\end{proof}

Theorem \ref{mc 2/3 upper bound} now follows immediately as a corollary from Lemma \ref{bicoloring to edge coloring} and the following proposition.

\begin{proposition}
For each $k\in \mathbb{N}$ there exists a $(M_k, M_k, N_k)$-bicolorable Steiner triple system where $M_k, N_k \to \infty$ and $\frac{M_k}{N_k}\to 1$ as $k\to \infty$.
\end{proposition}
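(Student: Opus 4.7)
The plan is to iterate Theorem \ref{CDR recursive construction} starting from a small explicit $(4,4,1)$-bicolorable Steiner triple system. For the base case, I would label the points of the unique Steiner triple system on $9$ points (the affine plane $\mathrm{AG}(2,3)$) by $\mathbb{Z}_3 \times \mathbb{Z}_3$, assign color $3$ to $(0,0)$, color $1$ to $\{(0,1),(0,2),(1,1),(2,2)\}$, and color $2$ to $\{(1,0),(2,0),(1,2),(2,1)\}$. Each of the four lines through $(0,0)$ pairs this point with a monochromatic pair, and a routine check of the remaining eight lines shows each uses exactly two colors, giving a $(4,4,1)$-bicoloring. Writing this as $(a,b,c) = (4,1,4)$, we have $c = \max = 4 \leq a+b = 5$, so the hypothesis of Theorem \ref{CDR recursive construction} is satisfied.

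Applying Theorem \ref{CDR recursive construction} with both $(a,b,c)$ and $(x,y,z)$ equal to $(4,1,4)$ produces a $(24, 24, 33)$-bicolorable Steiner triple system on $81$ points. Set $M_1 := 24$, $N_1 := 33$; note $M_1 \leq N_1 \leq 2 M_1$. Inductively, given an $(M_k, M_k, N_k)$-bicoloring with $M_k \leq N_k \leq 2 M_k$, I apply Theorem \ref{CDR recursive construction} with $(a,b,c) = (x,y,z) = (M_k, M_k, N_k)$ (so $c = N_k$ is the maximum and $c \leq 2M_k = a+b$) to obtain an $(M_{k+1}, M_{k+1}, N_{k+1})$-bicolorable Steiner triple system, where
\[
M_{k+1} = M_k(M_k + 2 N_k), \qquad N_{k+1} = 2 M_k^2 + N_k^2.
\]
Direct algebra yields $N_{k+1} - M_{k+1} = (N_k - M_k)^2 \geq 0$ and $2 M_{k+1} - N_{k+1} = N_k(4 M_k - N_k) \geq 0$ (using $N_k \leq 2 M_k$), so the inductive hypothesis is preserved and the iteration continues indefinitely.

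Finally, the total size is $v_k := 2 M_k + N_k$, and direct computation gives $v_{k+1} = v_k^2$, whence $v_k \to \infty$ and thus $M_k, N_k \to \infty$. Setting $r_k := N_k/M_k \geq 1$, the recursion becomes $r_{k+1} = (2 + r_k^2)/(1 + 2 r_k)$, so $r_{k+1} - 1 = (r_k - 1)^2/(1 + 2 r_k)$, giving quadratic convergence $r_k \to 1$ and hence $M_k / N_k \to 1$. The only point requiring any real care is identifying a base case whose output under the recursion retains the $(M, M, N)$ form with $M \leq N \leq 2M$; once the base $(4,4,1)$-bicoloring of $\mathrm{AG}(2,3)$ is verified, everything else is algebraic manipulation of the quadratic iteration.
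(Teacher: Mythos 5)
Your proposal is correct and follows essentially the same route as the paper: the same $(1,4,4)$-bicoloring of the unique STS on 9 points, the same application of the Colbourn--Dinitz--Rosa recursion to get $(24,24,33)$, the same iteration with $(M_k,M_k,N_k)$ substituted for both factors, and the same invariant $M_k\le N_k\le 2M_k$. Your algebraic treatment of the convergence is in fact tidier than the paper's — the exact identities $N_{k+1}-M_{k+1}=(N_k-M_k)^2$, $2M_{k+1}-N_{k+1}=N_k(4M_k-N_k)$, $v_{k+1}=v_k^2$, and $r_{k+1}-1=(r_k-1)^2/(1+2r_k)$ give the invariant and quadratic convergence directly, whereas the paper argues monotonicity and boundedness of the ratio and then passes to the limit.
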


\begin{proof}
It is easy to see that the unique Steiner triple system on 9 vertices is a $(1,4,4)$-bicolorable Steiner triple system.  Applying Theorem \ref{CDR recursive construction} with $a=x=1$ and $b=c=y=z=4$ gives a $(24, 24, 33)$-bicolorable Steiner triple system.  

Let $M_0 = 24$ and $N_0 = 33$ and for all $k\in \mathbb{N}$ let 
\begin{align*}
M_k &= M_{k-1}^2 + 2M_{k-1}N_{k-1} ~\text{ and}\\
N_k &= 2M_{k-1}^2+N_{k-1}^2.
\end{align*}
By induction, we have $M_k\leq N_k\leq 2M_k$ for all $k$, since
\begin{align*}
M_{k-1}^2+2M_{k-1}N_{k-1}\leq  2M_{k-1}^2+N_{k-1}^2\leq 2M_{k-1}^2+(2M_{k-1})^2\leq 2(M_{k-1}^2+2M_{k-1}N_{k-1}).
\end{align*}

Therefore, we may apply Theorem \ref{CDR recursive construction} with $a=b=x=y=M_{k-1}$ and $c=z=N_{k-1}$ and have that for each $k$ there is a $(M_k, M_k, N_k)$-bicolorable Steiner triple system. It is clear that $M_k$ and $N_k$ go to infinity, and it remains to show that their ratio tends to $1$. Define 
\[
r_k = \frac{M_k}{N_k},
\]
and note that $r_k \leq 1$ for all $k$. Also note that 
\begin{align}\label{r equation}
r_k = \frac{M_{k-1}^2 + 2M_{k-1}N_{k-1}}{2M_{k-1}^2 + N_{k-1}^2} = \frac{r_{k-1}^2N_{k-1}^2 + 2r_{k-1}N_{k-1}^2}{2r_{k-1}^2N_{k-1}^2 + N_{k-1}^2} =r_{k-1}\left(\frac{r_{k-1}+2}{2r_{k-1}^2+1} \right).
\end{align}
Now since $0\leq r_k \leq 1$ we have $2r_{k-1}^2+1\leq r_{k-1}^2+2\leq r_{k-1}+2$ which implies $$r_{k-1}\leq r_{k-1}\left(\frac{r_{k-1}+2}{2r_{k-1}^2+1}\right) = r_k$$ by \eqref{r equation}. 
Since $r_k$ is nondecreasing and bounded above, there is some $r$ such that $r_k\to r$ as $k\to \infty$. By \eqref{r equation}, we have
\[
r =\lim_{k\to\infty}r_k=\lim_{k\to \infty} \frac{r_{k-1}^2 + 2r_{k-1}}{2r_{k-1}^2 + 1}=r\left(\frac{r+2}{2r^2+1}\right),
\]
which is satisfied if and only if $r\in \{-1/2,0, 1\}$.  Since $r_0 >0$ we must have $r=1$.
\end{proof}

\section{Bose and Skolem triple systems}\label{sec:boseskolem}

There are unique Steiner triple systems $S_7$ and $S_9$ on 7 and 9 vertices respectively and it is straightforward to directly show that $mc_3(S_7)=6$ and $mc_3(S_9)=7$ (and $\alpha^*_3(S_7)=1$ and $\alpha^*_3(S_9)=2$).  These triple systems are the first non-trivial cases of two common constructions of Steiner triple systems by Bose and Skolem.  A {\em quasigroup} is a pair $(Q, \circ)$ where $Q$ is a set of size $k$ and $\circ$ is a binary operation on $Q$ such that the multiplication table is a latin square. That is, the equations $a\circ x = b$ and $y\circ a = b$ each have a unique solution for all $a,b\in Q$. A quasigroup is called {\em idempotent} if $a\circ a = a$ for all $a\in Q$ and is called {\em commutative} if $a\circ b = b\circ a $ for all $a,b\in Q$. A quasigroup is called {\em half-idempotent} if $k$ is even and $Q$ can be ordered in a way that cells $(i,i)$ and $(k+i, k+i)$ contain $i$ in the multiplication table for all $1\leq i\leq k$.

\medskip
\noindent{\bf Bose construction:} Let $n = 6k+3$ and let $(Q, \circ)$ be a commutative, idempotent quasigroup of order $2k+1$. Define $(V, \mathcal{B})$ as $V = Q \times \{0,1,2\}$ and $\mathcal{B}$ contains two types of triples.

Type 1: For all $a\in Q$ let $\{(a,0), (a,1), (a,2)\}$ be a triple.

Type 2: For all distinct $a,b\in Q$ and $i\in \{0,1,2\}$ let $\{(a,i), (b,i), (a\circ b, i+1)\}$ be a triple where addition in the second coordinate is done mod $3$. 

\noindent Because $Q$ is commutative and idempotent, one can check that this forms a Steiner triple system on $6k+3$ vertices (see \cite{CR} for more details).

Let $n = 6k+3$ and let $S$ be a Bose triple system on $n$ points. Then we may color the Type 1 triples with $3$ colors as evenly as possible, and color the Type 2 triples $\{(a,i), (b,i), (a\circ b, i+1)\}$ with color $i-1$ mod $3$. Then each color will touch at most $4k+2 + \left \lceil \frac{2k+1}{3}\right\rceil \sim \frac{7n}{9}$ vertices, and so we have $\mc_3(S) \lesssim \frac{7n}{9}$.

\medskip
\noindent{\bf Skolem construction:} Let $n = 6k+1$ and let $(Q, \circ)$ be a half-idempotent commutative quasigroup with $Q = \{0,\cdots, 2k-1\}$. Define $(V,\mathcal{B})$ by $V = \{\infty\} \cup (Q\times \{0,1,2\})$ and $\mathcal{B}$ as triples of three types.

Type 1: For all $0\leq a\leq k-1$, the triple $\{(a,0), (a,1), (a,2)\}$.

Type 2: For all $0\leq a\leq k-1$ the triples $\{\infty, (k\circ a, 0), (k, 1)\}$, $\{\infty, (k\circ a, 1), (k,2)\}$, and $\{\infty, (k\circ a, 2), (k, 0)\}$

Type 3: For all distinct $a,b\in Q$ and $i \in \{0,1,2\}$, the triple $\{(a,i), (b,i), (a\circ b, i+1)\}$ with addition in the second coordinate done mod $3$. 

\noindent One may check that this forms a Steiner triple system on $6k+1$ vertices  (see \cite{CR} for more details).

Given $S$, a Skolem triple system on $6k+1$ vertices, we may color Type 1 triples as evenly as possible with $3$ colors. Then we may color Types 2 and 3 triples with the color that is not represented in the second coordinate in any vertex of the triple. Each color class touches at most $\left \lceil \frac{k}{3}\right\rceil + 4k +1 \sim \frac{13n}{18}$ vertices, and so $\mc_3(S) \lesssim \frac{13n}{18}$.

\medskip

It seems likely that these colorings are best-possible for some or maybe ``most" Bose and Skolem triple systems, so we pose the following problems.

\begin{problem}\label{bose problem}
Is there an infinite sequence $\mathcal{S}$ of Bose triple systems such that $\mc_3(S)\sim 7|S|/9$ for all $S\in \mathcal{S}$?

\end{problem}

\begin{problem}\label{skolem problem}  

Is there an infinite sequence $\mathcal{S}$ of Skolem triple systems such that $\mc_3(S)\sim 13|S|/18$ for all $S\in \mathcal{S}$?

\end{problem}

In light of Theorem \ref{n-a*_lower}, in order to address say Problem \ref{bose problem} we can try to show that the following is impossible:  Given a Bose triple system $S=(V, \mathcal{B})$ on $n$ points, there is a partition of $V$ into four sets with $|V_1|\geq |V_2|\geq |V_3|\geq |V_4|$ and $\frac{2n}{9}\sim \alpha^*_3(S)\geq |V_2|\geq |V_3|\geq |V_4|$ and no triple touches three of the sets $V_1, V_2, V_3, V_4$.  There are at least two possible approaches here.  First, one can define an explicit quasigroup that defines the Bose triple system. For example, for $2k+1$ prime we may define the quasigroup by $Q = \mathbb{Z}_{2k+1}$ and $a\circ b = \frac{1}{2}(a+b)$ where addition is done in the field. In \cite{BN}, this Steiner triple system was shown to have nice expansion properties. Second, one could define a quasigroup ``randomly". Let $K$ be the complete graph on $[2k+1]$. Then we may associate multiplication tables of a quasigroup of order $2k+1$ to decompositions of $K$ into matchings of size $k$; that is, matchings which cover all but one vertex of $K$ (For each $x\in [2k+1]$, let $M_x$ be the perfect matching which is not incident with $x$. Place an $x$ in the $i$th row and $j$th column and the $j$th row and $i$th column for all $\{i,j\}\in M_x$).  Choosing such a decomposition randomly (or choosing some amount of nearly perfect matchings randomly and then completing the decomposition) should yield a quasigroup with nice expansion properties. 

\section{Conclusion}\label{sec:conc}

We begin this section by drawing the reader's attention to the following conjecture which is best possible if true by Theorem \ref{gy_thm}(i) (Theorem \ref{mc 2/3 lower bound}) and Proposition \ref{alpha3_upper}, and is known to be true  when $n\equiv 3\bmod 18$ by Theorem \ref{gy_thm}(iii).

\begin{conjecture}[Gy\'arf\'as {\cite[Conjecture 2, Conjecture 5]{G2}}]\label{conj:n/3}
For all $n\equiv 1,3 \bmod 6$, 
\begin{enumerate}
\item $\displaystyle \max_{S\in \mathcal{S}_n} \alpha^*_3(S)=\left\lfloor\frac{n}{3}\right\rfloor-1$ and

\item $\displaystyle \min_{S\in \mathcal{S}_n} \mc_3(S)=\left\lceil\frac{2n}{3}\right\rceil+1$.
\end{enumerate}
\end{conjecture}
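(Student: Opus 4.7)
The plan is to first observe that part (ii) is a formal consequence of part (i). Combining (i) with Theorem \ref{gy_thm}(ii) gives
\[
\min_{S\in\mathcal{S}_n}\mc_3(S) \;\leq\; n-\max_{S\in\mathcal{S}_n}\alpha^*_3(S) \;=\; n-\left\lfloor\tfrac{n}{3}\right\rfloor+1 \;=\; \left\lceil\tfrac{2n}{3}\right\rceil+1,
\]
while the matching lower bound is Theorem \ref{mc 2/3 lower bound}. So the whole conjecture reduces to part (i), and since the upper bound in (i) is already Proposition \ref{alpha3_upper}, the real content is to exhibit, for every $n\equiv 1,3\bmod 6$, some $S\in\mathcal{S}_n$ with $\alpha^*_3(S)=\lfloor n/3\rfloor-1$.

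The case $n\equiv 3\bmod 18$ is already settled by Theorem \ref{gy_thm}(iii), so the task is to fill in the remaining residue classes modulo $18$. Concretely, for each admissible $n$ one seeks a partition $V=V_1\sqcup V_2\sqcup V_3\sqcup W$ with $|V_1|=|V_2|=|V_3|=\lfloor n/3\rfloor-1$ and $|W|\in\{3,4\}$, together with a Steiner triple system on $V$ every triple of which is either contained in a single $V_i$, meets exactly two of $V_1,V_2,V_3$, or meets $W$. Equivalently, one wants a group divisible design on the groups $V_1,V_2,V_3$ in which every within-group and cross-group pair is covered by a triple not spanning all three groups, completed through $W$ so as to absorb every residual pair.

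The first natural attempt is to adapt the Bose and Skolem constructions from Section \ref{sec:boseskolem}. In Bose's construction on $6k+3$ points the three levels $(Q,0),(Q,1),(Q,2)$ are already a candidate $3$-partite hole, and the only obstruction is the family of Type~1 triples $\{(a,0),(a,1),(a,2)\}$, which are exactly the triples meeting all three levels. The idea is to move one point from each level into $W$ and then rewire the affected Type~1 and Type~2 triples into triples that are either within-level, two-level, or incident to $W$. A complementary approach is to iterate the recursive bicoloring construction of Theorem \ref{CDR recursive construction} with starting ingredients chosen so that the bicolorings $(a,b,c)$ satisfy $a=\lfloor n/3\rfloor-1$ (rather than merely $a/c\to 1$ as in Section \ref{sec:upper}); a bicoloring of this shape supplies the desired hole directly.

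The main obstacle is passing from an infinite subfamily of admissible $n$ (which is already accessible by iterating product or bicoloring constructions in the spirit of Section \ref{sec:upper}) to every admissible $n$. The common size $|V_i|=\lfloor n/3\rfloor-1$ is even for every $n\equiv 3\bmod 6$ and is $\equiv 5\bmod 6$ when $n\equiv 1\bmod 18$, so in these cases no $V_i$ itself supports a Steiner triple system and every within-$V_i$ pair must be covered either by a two-level triple or by a triple through $W$. A double counting then pins down the number of triples of each type almost uniquely, and arranging these rigid counts simultaneously across all three groups and $W$ is delicate. Bridging this last step --- most plausibly via latin squares with prescribed substructures or via PBD-closure arguments seeded by explicit base cases for small $n$ in each residue class modulo $18$ --- is where the substantive combinatorial difficulty lies.
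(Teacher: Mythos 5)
This statement is an open conjecture in the paper, not a theorem: it is labelled as Conjecture \ref{conj:n/3} and attributed to Gy\'arf\'as (Conjectures 2 and 5 of \cite{G2}), and the paper itself offers no proof, only the remark that a positive answer to (i) would imply a positive answer to (ii). Your reduction of (ii) to (i) via Theorem \ref{gy_thm}(ii) and Theorem \ref{mc 2/3 lower bound}, and your observation that the upper bound in (i) is already Proposition \ref{alpha3_upper}, are correct and precisely reproduce that remark; the arithmetic $n-\lfloor n/3\rfloor+1=\lceil 2n/3\rceil+1$ checks out in both residue classes. But from that point on your text is an outline of possible attacks rather than a proof: you propose modifying the Bose and Skolem constructions, or iterating Theorem \ref{CDR recursive construction} with carefully chosen seeds, or invoking PBD-closure, and you explicitly concede that the passage from an infinite family of admissible $n$ to \emph{every} admissible $n$ is ``where the substantive combinatorial difficulty lies.'' That concession is accurate --- none of the proposed routes is carried out or shown to terminate --- so the proposal does not prove the conjecture, and as far as the paper (and, to my knowledge, the literature) records, the lower-bound construction for all residue classes modulo $18$ remains open. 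If you want to make partial progress, the concrete target is to exhibit, for each $n\equiv 1,3\bmod 6$ outside the class $n\equiv 3\bmod 18$ already handled by Theorem \ref{gy_thm}(iii), a Steiner triple system on $n$ points realizing $\alpha^*_3=\lfloor n/3\rfloor-1$; but that is genuinely new work, not a gap that can be bridged by the sketch given.
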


We note that a positive answer to Conjecture \ref{conj:n/3}(i) would imply a positive answer to Conjecture \ref{conj:n/3}(ii).

If we let $\mathcal{C}$ be the set of real numbers $c$ such that there exists a sequence of Steiner triple systems $S_{n_k}$ on $n_k$ vertices with 
\[
\lim_{k\to \infty} \frac{\mc_3(S_{n_k})}{n_k} = c,
\]
then the results in this paper together with the results in \cite{G2} show that $\{2/3, 1\} \subseteq \mathcal{C}$. Answering Problems \ref{bose problem} and \ref{skolem problem} affirmatively would show that $\frac{7}{9}$ and $\frac{13}{18}$ are also in $\mathcal{C}$. It would be interesting to determine other real numbers in $\mathcal{C}$.

By combining Theorem \ref{n-a*_upper} and Theorem \ref{n-a*_lower}, we have for all $S\in \mathcal{S}_n$,  $\mc_3(S)=n-\alpha^*_3(S)$ unless there is a partition $\{V_1, V_2, V_3, V_4\}$ of $V(H)$ with $|V_1|\geq |V_2|\geq |V_3|\geq |V_4|$,  $\alpha^*_3(H)\geq |V_2|\geq |V_3|\geq |V_4|$, and $|V_1|+|V_2|<n-\alpha^*_3(S)$ such that no edge of $H$ intersects three of the sets $V_1,V_2,V_3,V_4$.  This raises the question of determining some sufficient condition that a Steiner triple system can satisfy which would rule out this latter possibility.

\begin{problem}
Does there exist an infinite family of Steiner triple systems $\mathcal{S}$ such that $\alpha^*_3(S)=(\frac{1}{6}+o(1))|S|$ and $\mc_3(S)=(2/3+o(1))|S|$ for all $S\in \mathcal{S}$?  

More generally, does there exist an infinite family of Steiner triple systems $\mathcal{S}$ such that $\mc_3(S)=|S|-2(1-o(1))\alpha^*_3(S)$?
\end{problem}

At the moment, we don't even have an example of a Steiner triple system $S\in \mathcal{S}_n$ such that $\mc_3(S)<n-\alpha^*_3(S)$, although we suspect they exist.

Gy\'arf\'as \cite{G} and Gy\'arf\'as and Haxell \cite{GH} proved $\mc_4(K_n^3)\geq 3n/4$, $\mc_5(K_n^3)\geq 5n/7$, $\mc_6(K_n^3)\geq 4n/6$ and these bounds are tight when $n$ is divisible by $4$, $7$, and $6$ respectively.  In general, F\"uredi and Gy\'arf\'as \cite{FG} showed that $\mc_r(K_n^3)\geq n/q$ where $q$ is the smallest integer such that $r\leq q^2+q+1$ and this is sharp when $q^3$ divides $n$, $r=q^{2}+q+1$, and an affine space of dimension $3$ and order $q$ exists.  As mentioned in the introduction, for $S\in \mathcal{S}_n$ we have $\mc_r(S)\geq \mc_r(K_n)\geq \frac{n}{r-1}$.  Surprisingly, Gy\'arf\'as \cite[Proposition 7]{G2} proved that if $r-1\equiv 1,3 \bmod 6$ and $r-1$ is a prime power, then for infinitely many $n$, $\displaystyle \min_{S\in \mathcal{S}_n}\mc_r(S)=\frac{n}{r-1}$.  It would be interesting to extend Theorem \ref{thm_main} to more colors, where a main obstacle would be a generalization of Lemma \ref{lem:3col}.  Likewise, it would be interesting to extend Theorem \ref{thm_random} to more colors.  

\begin{problem}
Let $S\in \mathcal{S}_n$ and let $r\geq 4$.  
\begin{enumerate}
\item Determine bounds on $\mc_r(S)$ in terms of $\alpha^*_3(S)$ (or some related parameter).  
\item Determine a lower bound on $\mc_r(S)$ which holds a.a.s.\ when $S\in \mathcal{S}_n$ is chosen uniformly at random.  
\end{enumerate}
\end{problem}

It is known (see \cite{AF}, \cite{AFL}) that in any $2$-coloring of the edges of $K_n^3$, there is a monochromatic matching on at least $3n/4$ vertices and a monochromatic loose cycle (and consequently a loose path) on  $(4/5-o(1))n$ vertices \cite{HLPRRSS}.  We propose studying these problems in the setting of Steiner triple systems.  As we did for monochromatic components, it would be interesting to get an absolute lower bound for all $S\in \mathcal{S}_n$, an upper bound for an infinite family $S_{n_k}$, and also to consider the case of a uniformly random $S\in \mathcal{S}_n$. 

\begin{problem}
Let $S\in \mathcal{S}_n$.  For an arbitrary 2-coloring of the edges of $S$,
\begin{enumerate}
\item what is the largest monochromatic matching?
\item what is the longest monochromatic loose path/cycle?
\end{enumerate}
\end{problem}

\section*{Acknowledgements}
We thank Tom Bohman, Charlie Colbourn, and Asaf Ferber for helpful discussions.  We also thank the referees for their  constructive feedback.

\end{document}